\newtheorem{theorem}{Theorem}
\newtheorem{lemma}[theorem]{Lemma}
\newtheorem{corollary}{Corollary}
\newtheorem{example}{Example}
\newtheorem{remark}{Remark}
\renewcommand{\Re}{\mathrm{Re}}
\renewcommand{\Im}{\mathrm{Im}}
\theoremstyle{definition}
\newcommand{\ov}{\overline}
\newcommand{\bb}{\mathbb}
\newcommand{\mr}{\mathrm}
\newcommand{\md}{\,\mathrm{d}}
\declaretheoremstyle
    [headformat={\NOTE}, 
    notebraces={}{}, 
    notefont=\bfseries, 
    preheadhook=\def\thmt@space{}, 
    numbered=no
    ]{namedtheorem}
\begin{document}

\author{Tianyu Zhao}

\title{Upper bounds on gaps between zeros of $L$-functions}

\email{zhao.3709@buckeyemail.osu.edu}
\address{Department of Mathematics, The Ohio State University, 231 West 18th
Ave, Columbus, OH 43210, USA.}

\subjclass[2020]{11M26, 11M41}
\keywords{$L$-functions, zero spacing, low-lying zeros}

\begin{abstract}
    We prove two unconditional upper bounds on the gaps between ordinates of consecutive non-trivial zeros of a general $L$-function $L(s)$. This extends previous work of Hall and Hayman (2000) on the Riemann zeta-function and work of Siegel (1945) on Dirichlet $L$-functions. Interestingly, we observe that while Hall and Hayman's method gives a sharper estimate when the degree of $L(s)$ is sufficiently small compared to the analytic conductor, Siegel's method does better in the other regime. 
\end{abstract}

\maketitle

\section{Introduction}

\subsection{Background}
One important theme in analytic number theory is the investigation of spacings between zeros of $L$-functions. The prototypical example of an $L$-function is the Riemann zeta-function, defined for $\Re(s)>1$ by the Dirichlet series
\[
\zeta(s)=\sum_{n=1}^\infty \frac{1}{n^s},
\]
which can be analytically continued to $\bb{C}$ except for a simple pole at $s=1$. The classical Riemann\textendash von Mangoldt formula (see, e.g., \cite[Ch.15]{Dav}) gives an estimate for $N(T)$, the number of non-trivial zeros $\rho=\beta+i\gamma$ of $\zeta(s)$ with $0<\gamma\leq T$:
\begin{equation}\label{Riemann-von mangoldt}
    N(T)=\frac{T}{2\pi}\log \frac{T}{2\pi e}+S(T)+O(1), \qquad S(T)=O(\log (T+3)).
\end{equation}
If we order the positive ordinates of zeta zeros by $0<\gamma_1\leq \gamma_2\leq \ldots$, this formula implies that the gaps $\gamma_{n+1}-\gamma_n$ are bounded by an absolute constant. In 1924, Littlewood \cite{Lit24a} proved a qualitatively better bound as $n\to \infty$, namely, there exists a positive constant $A>0$ (he showed that $A=32$ is admissible) such that
\begin{equation}\label{Littlewood bound}
    \gamma_{n+1}-\gamma_n \leq \frac{A}{\log\log\log \gamma_n}
\end{equation}
for all sufficiently large $n$. The proof uses the theory of elliptic functions and conformal mapping in complex analysis (see \cite[\S 9.12]{Tit86} for simpler proofs). The best known constant $A$ is due to Hall and Hayman \cite[Theorem 1]{HH00}:
\begin{equation}\label{Hall Hayman}
    \gamma_{n+1}-\gamma_n\leq \left(\frac{\pi}{2}+o(1)\right) \frac{1}{\log\log\log \gamma_n}, \qquad n\to \infty.
\end{equation}
More precisely, they showed that
\[
\min_\gamma |\gamma-T| \leq \frac{\pi/4}{\log\log\log T - \log\log\log\log\log T-32}
\]
for all large $T$. In addition to standard complex-analytic tools, their proof borrows concepts and ideas from hyperbolic geometry, as we shall see later in this article.

Our focus will be on unconditional results, but it is worth mentioning that under the assumption of the Riemann hypothesis (RH), i.e., if all the non-trivial zeros of $\zeta(s)$ lie on the critical line $\Re(s)=1/2$, Littlewood \cite{Lit24b} showed that the bound on the error term $S(T)$ in \eqref{Riemann-von mangoldt} can be improved from $O(\log T)$ to $O(\frac{\log T}{\log\log T})$ (see \cite{GolGon} and \cite{CCM2013} for sharper explicit constants in the last estimate). It follows immediately that 
\[
\gamma_{n+1}-\gamma_n\ll \frac{1}{\log\log \gamma_n}.
\]
Although this is already much better than what we can prove unconditionally, its order of magnitude is still far from what is expected to be true. Indeed, Farmer, Gonek and Hughes \cite[Conjecture B]{FGH07} conjectured based on random matrix theory models that the maximum size of $S(T)$ should be of order $\sqrt{\log T \log\log T}$. This also applies to $-S(T)$, so that the largest possible gap between $\gamma_{n}$ and $\gamma_{n+1}$ should be roughly $\sqrt{\frac{\log\log \gamma_n}{\log \gamma_n}}$.

The Riemann zeta-function can be extended in various ways. For instance, consider the Dirichlet $L$-function $L(s,\chi)$ associated to a primitive Dirichlet character $\chi\bmod q$. Define for $\Re(s)>1$
\[
L(s,\chi)=\sum_{n=1}^\infty \frac{\chi(n)}{n^s},
\]
which admits an analytic continuation to all of $\bb{C}$. In fact the above series converges whenever $\Re(s)>0$. Order the ordinates of its non-trivial zeros by $\ldots \leq \gamma_{\chi, -2}\leq \gamma_{\chi, -1} <0\leq \gamma_{\chi, 0}\leq \gamma_{\chi,1} \leq \ldots$, and let $N(T,\chi)$ be the number of such zeros with $-T\leq \gamma_\chi \leq T$. Similar to \eqref{Riemann-von mangoldt}, we have (see, e.g., \cite[Ch.16]{Dav})
\begin{equation}\label{Rie-Von L(s,chi)}
    N(T,\chi)=\frac{T}{\pi}\log \frac{qT}{2\pi e}+O(\log q(T+3))
\end{equation}
where the implied constant is uniform in $q$ and $T>0$. Therefore, if we fix a large constant $T$, then increasing the conductor $q$ also increases the density of zeros in the rectangle $\{s=\sigma+it: 0<\sigma<1, -T\leq t\leq T\}$. However, when $T$ is too small, we can no longer tell from \eqref{Rie-Von L(s,chi)} whether this is still the case. (Analogously, if $H>0$ is sufficiently small, then \eqref{Riemann-von mangoldt} fails to guarantee that $\zeta(s)$ always has a zero with $T\leq \gamma\leq T+H$.) Nevertheless, this is confirmed by the following result of Siegel \cite[Theorem IV]{Sie45} in 1945, an analogue of \eqref{Littlewood bound}:
\begin{equation}\label{Siegel}
    \gamma_{\chi, n+1}-\gamma_{\chi, n}\leq \frac{\pi+o(1)}{\log\log\log q}, \qquad q\to \infty
\end{equation}
for any fixed $n$. (Note that this estimate only captures the decay in the $q$-aspect and not the $t$-aspect.) In particular, it follows that
\begin{equation}\label{Siegel lowest zero}
    \min_{\gamma_\chi} |\gamma_\chi|\leq \left(\frac{\pi}{2}+o(1)\right) \frac{1}{\log\log\log q},
\end{equation}
so that the first zero of $L(s,\chi)$ gets arbitrarily close to the real axis as $q\to \infty$. Siegel's method is ingeniously simple. In comparison to Littlewood's proof of \eqref{Littlewood bound}, he used nothing more than the maximum modulus principle from complex analysis, yet the constant obtained is much sharper. However, Siegel's result appears to have been less well-known. In fact it directly answers a question raised in a recent paper by Bennett, Martin, O’Bryant and Rechnitzer \cite[Conjecture 1.3]{BMOR21}. 

To parallel the discussion for zeta, we briefly remark that conditionally on the Generalized Riemann hypothesis (GRH) for $L(s,\chi)$, one can improve the big-$O$ error term in \eqref{Rie-Von L(s,chi)} to $O(\frac{\log q(T+3)}{\log\log q(T+3)})$ by modifying Littlewood's proof \cite{Lit24b}. This yields
\[
\gamma_{\chi, n+1}-\gamma_{\chi, n}\ll \frac{1}{\log\log q}.
\]

\subsection{Motivation and setting}
Observe that the constant $\pi$ in Siegel's estimate \eqref{Siegel} for $L(s,\chi)$ is off by a factor of 2 compared to Hall and Hayman's estimate \eqref{Hall Hayman} for $\zeta(s)$. Due to the analogy between $\zeta(s)$ in the $t$-aspect and $L(s,\chi)$ in the $q$-aspect, it is natural to speculate that Hall and Hayman's method can be applied to sharpen \eqref{Siegel}. However, despite the extensive literature on zero spacings (and low-lying zeros) of Dirichlet $L$-functions, it appears that \eqref{Siegel} has never been updated, and nor were analogous unconditional estimates discussed in the contexts of more general $L$-functions. The conditional analogue, on the other hand, has been studied in a handful of references including \cite{Omar14, CCM2015, CarFin15, Zhao25}. GRH offers a significant advantage by enabling the application of the explicit formulas, which relate a sum over zeros of an $L$-function to a sum over primes. Hence, the goal of this note is to provide unconditional extensions of \eqref{Hall Hayman} and \eqref{Siegel} to a large class of $L$-functions that might be of interest and to compare the applicability and effectiveness of the two methods.

Throughout let $s=\sigma+it$ denote a generic complex number. We work with an $L$-function $L(s)$ with the following properties:
\begin{enumerate}[label=(\alph*)]
    \item It admits an absolutely convergent Euler product of the form
    \[
    L(s)=\prod_p \prod_{j=1}^m\left(1-\frac{\alpha_{j}(p)}{p^s}\right)^{-1},\qquad \sigma>1
    \]
    where 
    \begin{equation}\label{bound local parameter}
        |\alpha_{j}(p)|\leq p^\vartheta
    \end{equation}
    for some constant $0\leq \vartheta\leq 1$. Here $m$ is called the \textit{degree} of $L(s)$.

    \item For some positive integer $N_L$ called the \textit{conductor}, and complex numbers $\{\mu_j\}_{1\leq j\leq m}=\{\overline{\mu_j}\}_{1\leq j\leq m}$ with $\Re(\mu_j)>-1$, the completed $L$-function 
    \[
    \xi_L(s)= L(s) {N_L}^{s/2}\prod_{j=1}^{m} \Gamma_\bb{R}(s+\mu_j), \quad \Gamma_\bb{R}(s)=\pi^{-s/2}\Gamma(s/2)
    \]
    extends to a meromorphic function on $\bb{C}$ of order 1. The only possible poles of $\xi_L(s)$ are located at $s=0$ and $s=1$, each of order $0\leq r_L\leq m$. We also assume that $L(s)$ has no pole at $s=0$, so that its only pole is at $s=1$ of order $r_L$. Moreover, $\xi_L(s)$ satisfies the functional equation    
    \begin{equation}\label{functional equation}
        \xi_L(s)=\kappa \ov{\xi_L(1-\ov{s})}, \quad |\kappa|=1.
    \end{equation}
\end{enumerate}

\begin{remark}
    The main examples are $L$-functions attached to automorphic representations of $\mr{GL}_m$ over number fields. In this setting, according to the work of Luo, Rudnick and Sarnak \cite{LRS1995}, we have $\vartheta\leq \frac{1}{2}-\frac{1}{m^2+1}$ and $\Re(\mu_j)\geq -\frac{1}{2}+\frac{1}{m^2+1}$. The generalized Ramanujan\textendash Petersson conjecture asserts that $\vartheta=0$.
\end{remark}

It follows from the absolute convergence of the Euler product that $L(s)$ has no zero in the half-plane $\sigma>1$. Non-vanishing on the 1-line is much deeper and has been proven in the most interesting cases, but not in full generality. Also note that $L(s)$ must vanish at $\{-2k-\mu_j\}_{k\in \mathbb{N}_{\geq 0}}$ for each $\mu_j\neq 0$ and $\{-2k\}_{k\in \mathbb{N}_{\geq 1}}$ if $\mu_j=0$ for some $j$, but $L(0)$ can either be zero or nonzero depending on $r_L$ and the cardinality of $\{1\leq j\leq m: \mu_j=0\}$. We call these zeros arising from the gamma factors \textit{trivial zeros} of $L(s)$, and all other zeros are \textit{non-trivial zeros}. Our assumption $\Re(\mu_j)>-1$ allows the existence of trivial zeros throughout $\sigma<1$. The non-trivial zeros are located in the critical strip $0\leq \sigma\leq 1$. 

We shall measure the spacings between zero ordinates near $T$ in terms of the \textit{analytic conductor}
\begin{equation}\label{analytic conductor}
    C_L(T)=N_L \prod_{j=1}^m(|iT+\mu_j|+3).
\end{equation}

\subsection{Results}
Our first result is an extension of \eqref{Hall Hayman}:

\begin{theorem}\label{thm HH}
    When $\frac{\log\log C_L(T)}{\log(m+2)}$ is sufficiently large, $L(s)$ possesses a non-trivial zero $\rho=\beta+i\gamma$ with 
    \[
    |\gamma-T|\leq \frac{\pi}{4}\dfrac{1+2\vartheta}{\log\left(\frac{\log\log C_L(T)}{\log(m+2)}\right)-\log\log\log\left(\frac{\log\log C_L(T)}{\log(m+2)}\right)-(4+\log 2+o(1))}.
    \]
\end{theorem}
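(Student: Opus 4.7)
The plan is to adapt the conformal-mapping / hyperbolic-geometry method of Hall and Hayman \cite{HH00} to the general $L$-function setting (a), (b). The proof is by contradiction: I suppose $L(s)$ has no non-trivial zero $\rho$ with $|\gamma-T|\le H$ for $H$ slightly larger than the right-hand side in the theorem, and derive a contradiction. Multiplying $L$ by $(s-1)^{r_L}$ to kill the pole, this non-vanishing assumption produces a single-valued holomorphic branch of $\log L$ on a neighborhood of the rectangle
\[
\mc R=[1-\sigma_0,\sigma_0]\times[T-H,T+H], \qquad \sigma_0:=1+\vartheta+\eta,
\]
for a small auxiliary $\eta>0$ that will eventually tend to $0$.

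First I establish two a priori bounds for $\Re\log L$ on $\partial\mc R$, coming from the two defining features of $L$. On the right edge $\sigma=\sigma_0$, absolute convergence of the Euler product combined with the local-parameter bound \eqref{bound local parameter} immediately yields $|\log L(s)|\ll_\eta 1$. On the left edge $\sigma=1-\sigma_0$, the functional equation \eqref{functional equation} combined with Stirling's formula applied to the gamma factors $\Gamma_{\mathbb R}(s+\mu_j)$ in $\xi_L$ gives
\[
\Re\log L(s)\le\left(\tfrac12+\vartheta+\eta\right)\log C_L(T)+O(m),
\]
uniformly for $|t-T|\le H$ since $C_L(t)=(1+o(1))C_L(T)$ throughout this window in the regime of interest; Phragm\'en--Lindel\"of interpolation handles the short top and bottom edges.

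The hyperbolic-geometry input now enters. I map $\mc R$ conformally to the unit disk by a map $\phi$ (constructible from Jacobi elliptic functions) sending $s_0:=\tfrac12+iT$ to $0$. Because $\mc R$ is much wider than tall (horizontal extent $1+2\vartheta+2\eta$ versus vertical extent $2H\ll 1$), it is conformally close to a horizontal strip of width $2H$, on whose midline the hyperbolic metric is $(\pi/(2H))|dz|$. Standard computations then give that the hyperbolic distance inside $\mc R$ from $s_0$ to a midline point $s_1$ just inside the left edge is asymptotically $\pi(1+2\vartheta)/(4H)$, so that
\[
1-|\phi(s_1)|\asymp\exp\!\left(-\frac{\pi(1+2\vartheta)}{4H}\right).
\]

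To produce the contradiction I combine this conformal compression with a Jensen-type lower bound. The Riemann--von Mangoldt-type formula analogous to \eqref{Rie-Von L(s,chi)} (obtained from the Hadamard factorisation of $\xi_L$) gives $\asymp m\log C_L(T)$ non-trivial zeros of $L$ in the window $|t-T|\le 1$, which forces the oscillation of $\log|L|$ along the midline of $\mc R$ to be at least $\gtrsim\log(m+2)$ at some point $s_1$; applying the Borel--Carath\'eodory inequality in the disk to $f(z):=\log L(\phi^{-1}(z))-\log L(s_0)$ then yields
\[
\exp\!\left(\frac{\pi(1+2\vartheta)}{4H}\right)\lesssim\frac{\log C_L(T)}{\log(m+2)},
\]
which, after taking logarithms and solving for $H$, produces the stated bound with the constant $\pi/4$ inherited directly from the exponent above. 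The principal obstacle is making the interplay between the Borel--Carath\'eodory upper bound and the Jensen-type lower bound sharp enough to yield the exact constant $\pi/4$ as well as the precise iterated-logarithm correction $-\log\log\log(\cdots)$; secondarily, tracking the $m$-dependence through both the Stirling error $O(m)$ and the local zero density is what necessitates the hypothesis $\log\log C_L(T)/\log(m+2)\to\infty$, under which the Jensen-type lower bound overwhelms all error terms.
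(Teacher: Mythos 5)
Your proposal correctly identifies the overall framework from Hall and Hayman --- map the thin zero-free rectangle conformally onto the unit disk, estimate $\Re\log L$ on the boundary, and exploit the conformal compression near the far edge --- but the lower-bound mechanism you propose is the wrong one, and this is where the proof as sketched breaks down.

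The paper does not obtain its lower bound from Jensen's formula or from the local zero count. Instead, the lower bound comes directly from the functional equation: writing $s_3=\tfrac12-a(T)+iT$ and $s_4=\tfrac12+a(T)+iT$, the functional equation \eqref{functional equation} together with Stirling's formula gives
\[
\log\left|\frac{g(s_3)}{g(s_4)}\right| = a(T)\log C_L(T)+O(m),
\]
so one of the two points satisfies $|\log g(s_j)|\geq \tfrac{a(T)}{3}\log C_L(T)$. This is a quantity of size $a(T)\log C_L(T)$, which is what gets compared with the Borel--Carath\'eodory upper bound $\asymp e^{(1+2\vartheta)/a(T)}K(f)$ to force $a(T)$ small. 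Your Jensen-type lower bound of $\gtrsim\log(m+2)$ is far too weak --- plugging it in would give no contradiction --- and in fact the count of non-trivial zeros with $|t-T|\le 1$ is $\asymp\log C_L(T)$, not $\asymp m\log C_L(T)$, so even the heuristic input to that step is off.

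You are also missing the key Lemma~4 of \cite{HH00} (quoted as Lemma~\ref{lemma: caratheodory} in the paper), the Hadamard three-circles refinement of Borel--Carath\'eodory. This is not a cosmetic sharpening: the ordinary Borel--Carath\'eodory bound, used alone, only produces a contradiction when $K(f)$ is small (the paper's Case~1) and yields a bound in terms of $\log\log\log C_L(T)$ with no $m$-dependence. When $K(f)$ is large, one must introduce the interpolating exponent $c$ in $M(r,f)\asymp K(f)r^c$, and it is precisely the two-sided control on $c$ --- using the specific radius $r_1=\tanh(1-e^{-1/(m+1)})$, whose reciprocal is $<m+2$, to cap $M(r_1,f)<4$ against the contributions from the $\leq m$ trivial-zero factors --- that produces the quantity $\frac{\log\log C_L(T)}{\log(m+2)}$ inside the outer logarithm, together with the $-\log\log\log(\cdots)$ correction. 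In your sketch $\log(m+2)$ enters as a lower bound on the oscillation; in the paper it enters as $\log(1/r_1)$, which is a genuinely different role, and the $\log\log\log$ correction arises from $\log\log(13/a(T))$ in the upper bound on $c$, not from sharpening Borel--Carath\'eodory.
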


\begin{remark}
    In addition to \eqref{Hall Hayman}, \cite{HH00} provides estimates on the number of zeta zeros in various rectangular and non-rectangular regions. These results can be extended to our setting as well, but we shall not pursue them here. The reader should consult \cite{HH00} if they are interested in questions such as ``(when $q$ is large) what is the least number of non-trivial zeros of $L(s,\chi)$ with $|\gamma_\chi|<\frac{1}{100}$?" 
\end{remark}

We give two concrete applications of Theorem~\ref{thm HH}.
\begin{example}[$m=1$]
    For $L(s,\chi)$ where $\chi\bmod q$ is a primitive Dirichlet character, the completed $L$-function is
    \[
    \xi(s,\chi)=\left(\frac{q}{\pi}\right)^{s/2}\Gamma\left(\frac{s+\mathfrak{a}}{2}\right)L(s,\chi)
    \]
    where $\mathfrak{a}$ is $0$ or $1$ according as $\chi$ is even or odd.
    Then
    \[
    \min_{\gamma_\chi} |\gamma_\chi|\leq \left(\frac{\pi}{4}+o(1)\right)\frac{1}{\log\log\log q}
    \]
    as $q\to \infty$, improving the leading constant in \eqref{Siegel lowest zero} by a factor of 2. 
\end{example}

\begin{example}[$m=2$]
    Let $f\in S_k(\Gamma_0(N))$ be a normalized newform of weight $k$ and level $N$, and let $L(s,f)$ be the associated $L$-function. The completed $L$-function is
    \[
    \xi(s,f)=L(s,f)\bigg(\frac{N}{4\pi^2}\bigg)^{s/2} \Gamma\left(s+\frac{k-1}{2}\right).
    \]
    Then, as $k^2N\to \infty$,
    \[
    \min_{\gamma_f} |\gamma_f|\leq \left(\frac{\pi}{4}+o(1)\right)\frac{1}{\log\log\log k^2N}.
    \]
\end{example}

Unfortunately, Theorem~\ref{thm HH} suffers from the limitation that it is only applicable when $\frac{\log\log C_L(T)}{\log(m+2)}$ is large, and in order to obtain arbitrarily small gaps between zeros we need this quantity to tend to infinity. Ideally we want an estimate that applies to all $L$-functions in our general class given an arbitrary $T$, or at least when $|T|$ is sufficiently large. For simplicity we can assume that all the $\mu_j$ are real (or that their imaginary parts are bounded). Note however that no matter how large we take $|T|$, there possibly exists an $L$-function whose conductor $N_L$ is not large enough relative to its degree $m$ such that the quantity $\frac{\log\log C_L(T)}{\log(m+2)}$ is small. This means that Theorem~\ref{thm HH} is not suitable for handling families of $L$-functions with rapid degree growth (e.g., Dedekind zeta-functions; see below). To this end, we provide the following extension of Siegel's result \eqref{Siegel}, which has a less sharp leading constant but turns out to address the aforementioned issue.

\begin{theorem}\label{thm Siegel}
    When $C_L(T)^{1/m}$ is sufficiently large, $L(s)$ possesses a non-trivial zero $\rho=\beta+i\gamma$ with 
    \[
    |\gamma-T|\leq \frac{\pi}{2} \frac{1+2\vartheta}{\log\log\log C_L(T)^{1/m}}+O\left(\frac{1}{(\log\log\log C_L(T)^{1/m})^2}\right)
    \]
    where the implied constant is absolute. 
\end{theorem}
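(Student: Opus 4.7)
We adapt Siegel's 1945 argument \cite{Sie45} to the general setting. The proof is by contradiction: assume $L(s)$ has no non-trivial zero $\rho = \beta + i\gamma$ with $|\gamma - T| \leq H$, and derive an upper bound on $H$ matching the claim. Fix a small parameter $\eta > 0$ (optimized later) and set $A = \vartheta + \eta$, so that $\sigma = 1+A$ lies in the region of absolute convergence of the Euler product. From (a) we have $\bigl|\log|L(1+A+it)|\bigr| \ll m/\eta$ uniformly in $t$. Combined with the functional equation \eqref{functional equation} and Stirling's asymptotic applied to the ratio $\prod_j\Gamma_\bb{R}(1+A-it+\mu_j)/\Gamma_\bb{R}(-A+it+\mu_j)$, we obtain
\[
\log|L(-A+it)| = \bigl(\tfrac12+A\bigr)\log C_L(T) + O(m/\eta)
\]
uniformly for $|t-T| \leq 1$.

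Under our zero-free hypothesis, the function $L(s)(s-1)^{-r_L}$ is analytic and non-vanishing on the rectangle $R:=[-A,1+A]\times[T-H,T+H]$, so $\log L(s)$ admits a single-valued analytic branch there. Following Siegel, we apply the maximum modulus principle to $\log L$ in $R$---carried out via a conformal map of $R$ onto the unit disk constructed from a Jacobi elliptic function, followed by a Schwarz--Pick/Borel--Carath\'eodory-type estimate---to compare the value at an interior point with the boundary data above. The incompatibility between the large value $(\tfrac12+A)\log C_L(T)$ on the left edge and the small value $O(m/\eta)$ on the right edge produces, after a careful iteration of the max modulus bound (first on $\log L$, then on its iterated logarithms after harmless shifts), a constraint of the schematic form
\[
\frac{\pi(1+2A)}{2H} \leq \log\log\log\Bigl(\tfrac{(1/2+A)\log C_L(T)}{m/\eta}\Bigr) + O(1).
\]
Here the width factor $1+2A$ matches that of the extended critical strip, while the sharp coefficient $\pi/2$ emerges from the conformal modulus of $R$ (essentially from a $\sin$-map identification of the strip $-A \leq \sigma \leq 1+A$ with a half-plane), and the triply iterated logarithm mirrors the three-fold iteration.

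Using $\log C_L(T)/(m/\eta) \asymp \eta\log C_L(T)^{1/m}$ and solving for $H$ gives
\[
H \leq \frac{\pi(1+2A)}{2\bigl[\log\log\log C_L(T)^{1/m} + O(\log(1/\eta))\bigr]}.
\]
Choosing $\eta \to 0$ slowly enough that $A = \vartheta + o(1)$ and $\log(1/\eta) = o(\log\log\log C_L(T)^{1/m})$---for instance, $\eta = (\log\log\log C_L(T)^{1/m})^{-1/2}$---and expanding asymptotically yields the stated bound. The main obstacle is the second step: executing the iterated maximum modulus argument while retaining the sharp coefficient $\pi/2$ requires a careful analysis of the harmonic measure on $R$, the correct placement of auxiliary shifts to keep the branches of iterated logarithms well-defined, and tracking the cumulative $O(m/\eta)$ errors so that they land in the lower-order remainder after optimization in $\eta$.
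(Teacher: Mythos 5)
Your overall strategy (contradiction, boundary data from the Euler product and functional equation, a maximum-modulus/conformal-mapping estimate on a rectangle) is the right framework and matches the spirit of Siegel's and the paper's argument. However, there are two substantive gaps and one significant imprecision.

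First, the assertion that ``$L(s)(s-1)^{-r_L}$ is analytic and non-vanishing on the rectangle $R$'' is false in general. Since the $\mu_j$ are only required to satisfy $\Re(\mu_j)>-1$, the $L$-function can have trivial zeros at $-\mu_j$ with $\Re(-\mu_j)$ arbitrarily close to $1$, and (when $\Re(\mu_j)\leq -1/2$) at $-2-\mu_j$ with $\Re(-2-\mu_j)$ near $-1$. These lie inside your rectangle, so $\log L$ has no single-valued branch there and the whole maximum-modulus scheme breaks. The paper resolves this by working with
$g(s)=L(s)(s-1)^{r_L(T)}\big/\bigl(s^{d}\prod_{j\in J_1}(s+\mu_j)\prod_{j\in J_2}(s+2+\mu_j)\bigr)$
with judiciously chosen index sets $J_1,J_2$ and $d=\mathrm{ord}_{s=0}L(s)$; you would need an analogous removal of trivial zeros, and you would also need to show that the denominators contribute only $O(m)$ to the boundary estimates (this is carried out via \eqref{|log(1+z)|}).

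Second, the crucial step — ``a careful iteration of the max modulus bound (first on $\log L$, then on its iterated logarithms after harmless shifts)'' — is not a proof; it is precisely the content you need to supply, and the description of it as a three-fold iteration is misleading. Siegel's Lemma (Lemma~\ref{lemma Siegel}) gives the needed estimate in a \emph{single} application of a two-constants/harmonic-measure bound on a rectangle: the triple logarithm arises because $\log C_L(T)$ sits in the boundary data, the lemma's conclusion is a ratio of logarithms (yielding $\log\log C_L(T)^{1/m}$), and one then solves the exponential $\sinh(\pi\lambda\xi)/\sinh(\pi\lambda)\approx e^{-\pi\lambda(1-\xi)}$ for $\lambda$ (yielding the third $\log$). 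There is no iteration of logarithms of the function. You flagged this step yourself as the main obstacle; it is.

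Finally, a smaller but genuine quantitative issue: introducing an independent parameter $\eta$ with $A=\vartheta+\eta$ and optimizing with $\eta=(\log\log\log C_L(T)^{1/m})^{-1/2}$ produces an error term of order $(\log\log\log C_L(T)^{1/m})^{-3/2}$, weaker than the stated $O\bigl((\log\log\log C_L(T)^{1/m})^{-2}\bigr)$. The paper takes $\sigma_0=1+\vartheta+2b(T)$, tying the overshoot beyond $1+\vartheta$ directly to the zero-free half-width $b(T)\asymp 1/\log\log\log C_L(T)^{1/m}$, which is what produces the $(\log\log\log)^{-2}$ remainder. Relatedly, your use of the fixed left edge $\sigma=-A$ (which could be close to $0$) is awkward for applying a uniform Phragm\'en--Lindel\"of bound on $|L(s)|$; the paper instead takes the left edge at the fixed abscissa $\sigma=-3/2$, where Lemma~\ref{lemma: |L(s)| bound} applies and the Stirling estimates are uniform (one must also verify the argument of $\Gamma'/\Gamma$ stays away from the poles, which the paper does by checking $\sigma_j^*+\Re(\mu_j)>1/4$ in each case).
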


Compared to Theorem~\ref{thm HH}, Theorem~\ref{thm Siegel} is more applicable and is sharper when $(\log m)^2\gtrapprox \log\log C_L(T)$, or $(\log m)^2\gtrapprox \log\log N_L$ if $|iT+\mu_j|$ are small. 

\begin{example}
    For a number field $\bb{K}$ with $r_1$ real embeddings and $2r_2$ complex embeddings, let $\Delta_{\bb{K}}$ be the discriminant, $n_{\bb{K}}=[\bb{K}:\bb{Q}]$ be the degree of extension, and $\zeta_{\bb{K}}(s)$ be the associated Dedekind zeta-function. Then the completed $L$-function is
    \[
    \xi_{\bb{K}}(s)=\left(\frac{\Delta_{\bb{K}}}{4^{r_2}\pi^{n_{\bb{K}}}}\right)^{s/2}\Gamma\left(\frac{s}{2}\right)^{r_1}\Gamma(s)^{r_2}\zeta_{\bb{K}}(s).
    \]
    When the root discriminant $\mr{rd_{\bb{K}}}=\Delta_{\bb{K}}^{1/n_{\bb{K}}}$ is sufficiently large, 
    \[
    \min_{\gamma_{\bb{K}}}|\gamma_{\bb{K}}|\leq \left(\frac{\pi}{2}+o(1)\right)\frac{1}{\log\log\log \mr{rd_{\bb{K}}}}.
    \]
    Note that it is possible for $\mr{rd_{\bb{K}}}\to \infty$ while
    $\frac{\log\log \Delta_{\bb{K}}}{\log n_{\bb{K}}}\to 1$. 
\end{example}

Theorem~\ref{thm Siegel} implies that gaps between zeros of $L$-functions are uniformly bounded, which may alternatively be derived from the zero-counting formula for $L(s)$ in the spirit of \eqref{Riemann-von mangoldt} and \eqref{Rie-Von L(s,chi)}.

\begin{corollary}\label{corollary}
    There exists a universal constant $N_0>0$ such that each rectangle of the form $0\leq \sigma\leq 1$, $T-N_0\leq t\leq T+N_0$ contains a non-trivial zero of $L(s)$.
\end{corollary}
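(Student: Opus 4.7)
The plan is to deduce Corollary~\ref{corollary} from Theorem~\ref{thm Siegel} via a case split on $C_L(T)^{1/m}$. Since the bound in Theorem~\ref{thm Siegel} is a decreasing function of $C_L(T)^{1/m}$ that tends to $0$, there exists an absolute constant $A_0>0$ such that $C_L(T)^{1/m}\geq A_0$ already yields a non-trivial zero within distance $1$ of $T$; this handles the first case with $N_0\geq 1$. For the complementary case $C_L(T)^{1/m}<A_0$, I would apply Theorem~\ref{thm Siegel} instead at a nearby point $T'\in[T-N_*,T+N_*]$ satisfying $C_L(T')^{1/m}\geq A_0$, where $N_*$ is a universal constant; the resulting zero would lie within distance $N_*+1$ of $T$, so one takes $N_0=N_*+1$.

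To produce such a $T'$, I would use an averaging argument. From the pointwise inequality $|iT'+\mu_j|\geq|T'+\Im\mu_j|$ together with a direct computation of $\int\log(|s-c|+3)\,ds$, one obtains, uniformly in $\mu_j$,
\[
\frac{1}{2N_*}\int_{T-N_*}^{T+N_*}\log(|iT'+\mu_j|+3)\,dT'\geq \log N_*+O(1).
\]
Summing in $j$ and dividing by $m$ then yields the same lower bound for the average of $\log C_L(T')^{1/m}$ over the window $[T-N_*,T+N_*]$, so the supremum of $C_L(T')^{1/m}$ over this window is $\gg N_*$. Taking $N_*$ sufficiently large in terms of $A_0$ forces this supremum to exceed $A_0$ and supplies the desired $T'$.

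The main obstacle is technical: the integral bound must be verified uniformly in the possibly unbounded $\Im\mu_j$. When $\Im\mu_j$ lies far outside $[T-N_*,T+N_*]$, the integrand is uniformly large and the bound is immediate; within the window, the worst case for the average occurs when $-\Im\mu_j$ sits at the center, which still gives $\log N_*-1$. As the paper remarks, an alternative route bypasses Theorem~\ref{thm Siegel} entirely: a Riemann\textendash von Mangoldt-type zero-counting formula of the shape $\#\{\rho:|\gamma-T|\leq H\}=\frac{H}{\pi}\log C_L(T)+O(\log C_L(T))$, combined with the universal lower bound $\log C_L(T)\geq m\log 3$, immediately furnishes a non-trivial zero in $[T-H,T+H]$ for any absolute $H$ exceeding $\pi$ times the implied constant.
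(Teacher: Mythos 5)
Your proposal is correct, but it takes a genuinely different route from the paper. The paper's argument is a two-point one: it supposes the window around $T$ is zero-free, places two points $T_1, T_2$ near its ends so that each has a zero-free radius-5 neighborhood, applies Theorem~\ref{thm Siegel} at both to conclude that $C_L(T_1)^{1/m}$ and $C_L(T_2)^{1/m}$ are bounded by a universal constant $N_1$, and then deduces from the triangle inequality $T_2 - T_1 \leq |T_1 + \Im\mu_{j'}| + |T_2 + \Im\mu_{j'}|$ (for the $j'$ minimizing the combined logarithms) that the window width is at most $N_1^2$. Your argument instead locates a single good point $T'$ near $T$ with $C_L(T')^{1/m}$ large, via an averaging bound on $\frac{1}{m}\sum_j \log(|iT'+\mu_j|+3)$ over $[T - N_*, T + N_*]$, and applies Theorem~\ref{thm Siegel} only once, at $T'$. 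Both arguments hinge on the observation that the $\mu_j$ cannot all cluster near $-iT$ throughout a long interval; the paper extracts this from two pointwise evaluations, while you use an integral average, which is a little heavier on calculation (the $\log N_* - 1$ lower bound on the window average) but is a clean alternative. The zero-count route you mention at the end is exactly what the paper alludes to in the sentence preceding the corollary; it circumvents Theorem~\ref{thm Siegel} but requires a Riemann--von Mangoldt-type formula with controlled error for this general class of $L$-functions.
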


Here we do not attempt to make the constant $N_0$ explicit. Under GRH and several other restrictions on $L(s)$, such explicit estimates have been obtained in \cite{Miller02, Boberetal15, KRZ17} via applications of the explicit formulas to test functions with certain positivity properties.

\begin{proof}[Proof of Corollary~\ref{corollary}]
    Suppose that $[T-N_0, T+N_0]$ contains no zero ordinate with $N_0>10$, say. Set $T_1=T-5$ and $T_2=T+5$. Since both intervals centered at $T_1$ and $T_2$ with radius 5 are zero-free, Theorem ~\ref{thm Siegel} implies that $C_L(T_1)^{1/m}$ and $C_L(T_2)^{1/m}$ are both bounded by some universal constant $N_1$. Hence, 
    \begin{align*}
        2\log N_1\geq& \log C_L(T_1)^{1/m}+\log C_L(T_2)^{1/m}\\
        \geq& \min_{1\leq j\leq m }[\log(3+|iT_1+\mu_j|)+\log(3+|iT_2+\mu_j|)].
    \end{align*}
    Let $j'$ be the index at which the minimum is attained. It follows that
    \[
    N_0=T_2-T_1\leq |T_1+\Im(\mu_{j'})|+|T_2+\Im(\mu_{j'})|\leq N_1^2.
    \]
\end{proof}

To prove the two main theorems, we largely follow the original proofs of \cite[Theorem 1]{HH00} and \cite[Theorem IV]{Sie45}. While the former proof is much more involved than the latter, the essential strategy common to both arguments is to construct and estimate certain auxiliary functions involving $L(s)$ that are holomorphic in certain rectangular regions. In our general setting, more care is needed mostly due to the randomness of the parameters $\mu_j$, which affect a number of things including locations of the trivial zeros, applicability of Stirling's formula when estimating the gamma factors, etc.

\section{Preparations}

\subsection{Estimating the size of $L(s)$}
Following the ideas in \cite{PalZha25}, we first prove an explicit bound on $L(s)$ that will be used in the proofs of both theorems and might also be of independent interest. For this we need a modified version of the Phragm\'{e}n\textendash Lindel\"{o}f principle due to Rademacher:

\begin{lemma}[{\cite[Theorem 2]{Rad59}}]\label{lemma Rademacher}
    Let $g(s)$ be an analytic function of finite order in the strip $S(a,b)=\{s: a\leq \sigma\leq b\}$ such that
    \[
    \begin{cases}
        |g(a+it)|\leq A\prod_{j=1}^m |Q_j+a+it|^\alpha \\
        |g(b+it)|\leq B\prod_{j=1}^m |Q_j+b+it|^\beta
    \end{cases}
    \]
    with $\Re(Q_j)+a>0$ and $\alpha\geq \beta$. Then for any $s\in S(a,b)$,
    \[
    |g(s)|\leq \Bigg(A\prod_{j=1}^m|Q_j+s|^\alpha\Bigg)^{\frac{b-\sigma}{b-a}} \Bigg(B\prod_{j=1}^m|Q_j+s|^\beta\Bigg)^{\frac{\sigma-a}{b-a}}.
    \]
\end{lemma}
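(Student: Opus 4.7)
The plan is to recast the desired inequality as a comparison between a subharmonic function and a superharmonic one, so that a Phragm\'{e}n\textendash Lindel\"{o}f maximum principle for subharmonic functions yields the bound at once. Define the real-valued auxiliary function
\[
V(s) := \frac{b-\sigma}{b-a}\log A + \frac{\sigma-a}{b-a}\log B + u(\sigma) \sum_{j=1}^m \log|Q_j + s|, \qquad u(\sigma) := \alpha\,\frac{b-\sigma}{b-a} + \beta\,\frac{\sigma-a}{b-a},
\]
so that the claimed inequality reads $\log|g(s)| \leq V(s)$ throughout $S(a,b)$. The hypothesis $\Re(Q_j)+a>0$ ensures $Q_j+s$ is zero-free in the strip, so $L(s) := \sum_j \log|Q_j+s|$ is harmonic. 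Writing $V = c_1+c_2\sigma+u(\sigma)L(s)$ and using $u''\equiv 0$, a direct computation yields
\[
\Delta V = 2u'(\sigma)\,\partial_\sigma L(s) = \frac{2(\beta-\alpha)}{b-a}\sum_{j=1}^m \frac{\Re Q_j + \sigma}{|Q_j+s|^2} \leq 0,
\]
where the sign comes from $\alpha\geq\beta$ together with $\Re Q_j+\sigma\geq \Re Q_j+a>0$. Hence $V$ is superharmonic in $S(a,b)$.

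Since $\log|g(s)|$ is subharmonic in $S(a,b)$ (with $-\infty$ at zeros of $g$), the difference $W(s) := \log|g(s)| - V(s)$ is subharmonic. The two hypotheses on $|g|$ translate directly to $W\leq 0$ on $\sigma=a$ and $\sigma=b$, because $V(a+it)=\log\bigl(A\prod_j|Q_j+a+it|^\alpha\bigr)$ and $V(b+it)=\log\bigl(B\prod_j|Q_j+b+it|^\beta\bigr)$. Next, $g$ has finite order by assumption and $V$ grows at most like $O(\log|t|)$, so $W$ has finite order in the strip. Applying the Phragm\'{e}n\textendash Lindel\"{o}f maximum principle for subharmonic functions of finite order in $S(a,b)$ (proved in the usual way by comparison with a barrier of the form $\delta\cosh(\pi(s-a)/(b-a))$ followed by $\delta\to 0^+$) upgrades the boundary inequality to $W\leq 0$ throughout $S(a,b)$. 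Exponentiating gives the lemma.

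The main delicacy is the subharmonic Phragm\'{e}n\textendash Lindel\"{o}f step, which requires the growth of $g$ to be slower than $e^{\pi|t|/(b-a)}$; this is standard under the ``finite order'' assumption as interpreted in Rademacher's original formulation. An alternative closer to Rademacher's own argument constructs the holomorphic auxiliary
\[
F(s) := A^{(b-s)/(b-a)}\,B^{(s-a)/(b-a)}\prod_{j=1}^m (Q_j+s)^{u(s)}
\]
via the principal branch of $\log(Q_j+s)$, observes the identity
\[
|(Q_j+s)^{u(s)}| = |Q_j+s|^{u(\sigma)}\exp\!\Bigl(\tfrac{(\alpha-\beta)t}{b-a}\arg(Q_j+s)\Bigr),
\]
and exploits the sign of $t\cdot\arg(Q_j+s)$ together with $\alpha\geq\beta$ to control $|g/F|$ on the boundary before invoking the classical two-constants theorem. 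The subharmonic formulation, however, makes it fully transparent that the hypothesis $\alpha\geq\beta$ is precisely what bends the interpolation $u(\sigma)$ in the direction keeping $V$ superharmonic and thereby powering the whole argument.
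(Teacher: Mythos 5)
The paper does not actually prove this lemma; it is quoted verbatim from Rademacher \cite{Rad59} and used as a black box. So there is no in-paper proof to compare against; instead I compare with Rademacher's published argument, which your second (``alternative'') sketch correctly describes: one forms a holomorphic comparison function $F(s)=A^{(b-s)/(b-a)}B^{(s-a)/(b-a)}\prod_j(Q_j+s)^{u(s)}$ with a complex-linear exponent, tracks the extra factor $\exp\bigl(\tfrac{(\alpha-\beta)t}{b-a}\arg(Q_j+s)\bigr)$ coming from $\Im\,u(s)\ne 0$, and then applies the classical Phragm\'en--Lindel\"of/Hadamard three-lines inequality to $g/F$. Your primary argument is a genuinely different (and, in my view, cleaner) route: you recast the target bound as $\log|g|\le V$ with $V$ a real interpolant, and you verify by direct Laplacian computation that $V$ is superharmonic precisely because $\alpha\ge\beta$ and $\Re Q_j+\sigma>0$. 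Since $\log|g|$ is subharmonic and the boundary estimates say $W=\log|g|-V\le 0$ on $\sigma=a$ and $\sigma=b$, the subharmonic Phragm\'en--Lindel\"of maximum principle finishes. The computation $\Delta V=2u'(\sigma)\,\partial_\sigma\!\sum_j\log|Q_j+s|=\tfrac{2(\beta-\alpha)}{b-a}\sum_j\tfrac{\Re Q_j+\sigma}{|Q_j+s|^2}\le 0$ is correct, as is the identification of $V$ on the two boundary lines, and the finite-order hypothesis does give growth well below the critical threshold $e^{\pi|t|/(b-a)}$. The subharmonic formulation sidesteps the branch and sign bookkeeping of Rademacher's auxiliary function and makes the role of the hypothesis $\alpha\ge\beta$ completely transparent, at the price of invoking a (standard) subharmonic version of Phragm\'en--Lindel\"of rather than the usual holomorphic one. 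One small imprecision in the parenthetical: the barrier $\delta\cosh(\pi(s-a)/(b-a))$ is not quite right --- its real part vanishes and changes sign on $\sigma=a$, so it is not a positive majorant on the boundary; the usual choice is $\delta\,\Re\cos\bigl(\eta(s-\tfrac{a+b}{2})\bigr)=\delta\cos\bigl(\eta(\sigma-\tfrac{a+b}{2})\bigr)\cosh(\eta t)$ with $\eta<\pi/(b-a)$, which is strictly positive on the closed strip and grows like $e^{\eta|t|}$. With that repaired the argument is complete and correct.
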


This allows us to deduce the following result:

\begin{lemma}\label{lemma: |L(s)| bound}
    Let $0\leq d\leq \mathrm{ord}_{s=0}L(s)$ where $\mathrm{ord}_{s=0}L(s)$ denotes the order of vanishing of $L(s)$ at $s=0$,
    \[
    J_1\subset \{1\leq j\leq m: -1<\Re(\mu_j)\leq 3/2, \:\mu_j\neq 0\},
    \]
    \[
    J_2\subset J_1\cap \{1\leq j\leq m: -1<\Re(\mu_j)\leq -1/2\}.
    \]  
    Then the function 
    \[
    g(s)=\frac{L(s)(s-1)^{r_L}}{s^{d}\prod_{j\in J_1}(s+\mu_j)\prod_{j\in J_2}(s+2+\mu_j)}
    \]
    is bounded on $\sigma\in [-3/2,5/2]$ by
    \[
    |g(s)|<e^m\Bigg|N_L\prod_{j=1}^m \frac{s+\frac{5+\sqrt{13}}{2}+\mu_j}{2\pi}\Bigg|^{\frac{5/2-\sigma}{2}}|s-4|^{r_L}.
    \]
\end{lemma}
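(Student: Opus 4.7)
The plan is to apply Lemma~\ref{lemma Rademacher} (Rademacher's version of Phragm\'en--Lindel\"of) to $h(s) := g(s)/(s-4)^{r_L}$ on the strip $-3/2 \leq \sigma \leq 5/2$, after first verifying that $h$ is holomorphic of finite order there and obtaining matching boundary estimates. Holomorphicity of $g$ holds because the only pole of $L$ in the strip, at $s=1$ of order $r_L$, is cancelled by $(s-1)^{r_L}$; the constraint $d \leq \mathrm{ord}_{s=0}L$ together with the defining conditions on $J_1, J_2$ ensures that the remaining denominator factors of $g$ cancel only trivial zeros of $L$ at their available multiplicities, so no poles arise. Since $s=4$ lies outside the strip, $h$ is holomorphic, and finite order is inherited from $\xi_L$ together with Stirling applied to the gamma factors.

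On the right edge $\sigma = 5/2$, absolute convergence of the Euler product and $|\alpha_j(p)| \leq p^\vartheta \leq p$ give $|L(5/2+it)| \leq \zeta(3/2)^m$; every factor in the denominator of $g$ has modulus $\geq 1$, and $|3/2+it|=|-3/2+it|$, so
\[
|h(5/2+it)| \leq \zeta(3/2)^m \leq e^{3m},
\]
the last inequality using $\zeta(3/2)<e$. On the left edge $\sigma = -3/2$, the functional equation \eqref{functional equation}, Schwarz reflection, and $\{\mu_j\}=\{\ov{\mu_j}\}$ give
\[
|L(-3/2+it)| = N_L^{2}\,|L(5/2+it)|\prod_{j=1}^m\frac{|\Gamma_\bb{R}(5/2+it+\mu_j)|}{|\Gamma_\bb{R}(-3/2+it+\mu_j)|},
\]
and the shift identity $\Gamma_\bb{R}(z+4)/\Gamma_\bb{R}(z) = z(z+2)/(4\pi^2)$ evaluates each ratio exactly to $|(-3/2+it+\mu_j)(1/2+it+\mu_j)|/(4\pi^2)$. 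After the factors in the denominator of $g$ cancel their counterparts (for $j\in J_1$ and $j\in J_2$) and $(s-4)^{r_L}$ is absorbed, the desired left-edge bound reduces, in the worst case $J_1=J_2=\emptyset$, to the pointwise inequality
\[
|(w-3/2)(w+1/2)| \leq |w+(2+\sqrt{13})/2|^2 \qquad (\Re w>-1).
\]
This I would deduce by AM--GM from the linear inequality $|w-3/2|^2+|w+1/2|^2 \leq 2|w+(2+\sqrt{13})/2|^2$: its difference depends only on $u = \Re w$, is affine in $u$ with positive slope $2(1+2c)$ (where $c=(2+\sqrt{13})/2$), and the value of $c$ is chosen so that the difference vanishes exactly at $u=-1$. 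Coupled with $\zeta(3/2)^m\leq e^{3m}$, this produces
\[
|h(-3/2+it)| \leq e^{3m}\,(N_L/(2\pi)^m)^2 \prod_{j=1}^m|(2+\sqrt{13})/2+it+\mu_j|^2.
\]

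With these two boundary bounds, I would invoke Lemma~\ref{lemma Rademacher} with $Q_j=(5+\sqrt{13})/2+\mu_j$, $\alpha=2$, $\beta=0$, $A=e^{3m}(N_L/(2\pi)^m)^2$, and $B=e^{3m}$; the hypothesis $\Re(Q_j)-3/2>0$ holds because $\Re(Q_j)>(3+\sqrt{13})/2>3/2$. Interpolation produces the exponent $2\cdot(5/2-\sigma)/4=(5/2-\sigma)/2$ on $\prod_j|Q_j+s|$, and the prefactors collapse to $e^{3m}(N_L/(2\pi)^m)^{(5/2-\sigma)/2}$. Multiplying back by $|s-4|^{r_L}$ recovers the claimed bound on $g(s)$. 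The main technical obstacle is the pointwise quadratic inequality on the left edge, which is precisely what pins down the peculiar constant $(5+\sqrt{13})/2$; once this is in place, the rest of the argument is bookkeeping.
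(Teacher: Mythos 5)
Your proposal is correct and follows essentially the same route as the paper: Rademacher's Phragm\'en--Lindel\"of lemma applied to the strip $-3/2\le\sigma\le 5/2$, with the right-edge bound coming from absolute convergence and the left-edge bound from the functional equation, the duplication $s\Gamma(s)=\Gamma(s+1)$, and the pointwise inequality $|(w-3/2)(w+1/2)|\le|w+c|^2$ with $c=\tfrac{2+\sqrt{13}}{2}$ valid for $\Re w>-1$. Your derivation of the constant is in fact a useful addition: the paper asserts the bound $\prod_j|\,\cdot\,|<\prod_j|\mu_j+1+\sqrt{13}/2+it|^2$ without explaining where $\sqrt{13}$ comes from, while you obtain it cleanly by applying AM--GM to reduce to the affine inequality $(u-3/2)^2+(u+1/2)^2\le 2(u+c)^2$ in $u=\Re w$ and choosing $c$ so the inequality is tight at $u=-1$.

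One small imprecision: you call $J_1=J_2=\emptyset$ the ``worst case,'' implying the $j\in J_1$ and $j\in J_2$ contributions are dominated by the $j\notin J_1$ inequality. That is not literally true: for $j\in J_1\setminus J_2$ (where $-1/2<\Re\mu_j\le 3/2$) the needed per-index bound after cancellation is $|w+1/2|\le|w+c|^2$, and this does not follow from $|w-3/2|\,|w+1/2|\le|w+c|^2$ because $|w-3/2|$ can be less than $1$ when $\Re\mu_j$ is near $3/2$ and $\Im w$ is small. However, both remaining cases are in fact easier by a direct argument: for $j\in J_1\setminus J_2$ one has $|w+1/2|<|w+c|$ (same imaginary part, both real parts positive, larger real shift) and $|w+c|\ge\Re(w+c)>-1/2+c>1$, so $|w+1/2|<|w+c|^2$; for $j\in J_2$ one needs only $1\le|w+c|^2$, which holds since $\Re(w+c)>-1+c=\sqrt{13}/2>1$. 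Stating this explicitly would close the gap; once noted, the rest of your bookkeeping (including the $(s-1)/(s-4)$ normalization, which is equivalent to the paper's substitution $L(s)\mapsto L(s)\bigl(\tfrac{s-1}{s-4}\bigr)^{r_L}$, and the interpolation of exponents and prefactors) is accurate.
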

Note that $g(s)$ is analytic and the denominator allows for flexibility if one wants to remove the trivial zeros of $L(s)$ in the strip $\sigma\in [-3/2,1)$. Our bound is far from being sharp but suffices for our applications. For example, when $\sigma=1/2$ the exponent $\frac{5/2-\sigma}{2}=1$ should be replaced by $1/4$ according to the convexity bound. See \cite{PalZha25} for a slightly more refined application of Lemma~\ref{lemma Rademacher} toward this end.

\begin{proof}
    First observe that
    \[
    \log L(s)=\sum_{n=1}^\infty \frac{\Lambda_L(n)}{n^s \log n}, \qquad \sigma>1
    \]
    where the Dirichlet series converges absolutely and its coefficients satisfy
    \begin{equation}\label{generalized von mangoldt}
        |\Lambda_L(n)|\leq m\Lambda(n)n^\vartheta.
    \end{equation}
    Here $\vartheta$ was introduced in \eqref{bound local parameter} and $\Lambda(n)$ denotes the von Mangoldt function, which takes the value $\log p$ when $n=p^k$ where $k\geq 1$ and 0 otherwise. Since
    \begin{equation*}
        \log \zeta(s)=\sum_{n=1}^\infty \frac{\Lambda(n)}{n^s \log n}, \qquad \sigma>1,
    \end{equation*} 
    we have 
    \[
    \left|\log L\left(\frac{5}{2}+it\right)\right|\leq \sum_{n=1}^\infty \frac{m\Lambda(n)n^\vartheta}{n^{5/2}\log n}=m\log \zeta\left(5/2-\vartheta\right)\leq m\log \zeta(3/2)<m,
    \]
    and hence $|L(s)|<e^m$ on $\sigma=5/2$. First suppose that $L(s)$ is entire, i.e., $r_L=0$. Trivially,
    \[
    \left|g\left(\frac{5}{2}+it\right)\right|\leq \frac{e^m}{(5/2)^d \prod_{j\in J_1}|\mu_j+5/2+it| \prod_{j\in J_2}|\mu_j+9/2+it|}<e^m.
    \] 
    From the functional equation \eqref{functional equation}, the recurrence relation 
    \begin{equation}\label{gamma relation}
        s\Gamma(s)=\Gamma(s+1),
    \end{equation}
    and our assumption $\Re(\mu_j)>-1$, we also deduce that
    \begin{align*}
        &\left|g\left(-\frac{3}{2}+it\right)\right|\\
        &=
        \left|\frac{L(5/2+it)}{(-3/2+it)^d}\right| \frac{N_L^2}{\pi^{2m}} \prod_{j\in J_1\setminus J_2} \bigg|\frac{\Gamma(\frac{\mu_j+5/2+it}{2})}{2\Gamma(\frac{\mu_j+1/2+it}{2})}\bigg|       
        \prod_{j\in J_2} \bigg|\frac{\Gamma(\frac{\mu_j+5/2+it}{2})}{4\Gamma(\frac{\mu_j+5/2+it}{2})}\bigg| 
        \prod_{j\not \in J_1}\bigg|\frac{\Gamma(\frac{\mu_j+5/2+it}{2})}{\Gamma(\frac{\mu_j-3/2+it}{2})}\bigg|\\
        &= \left|\frac{L(5/2+it)}{(-3/2+it)^d}\right|\frac{N_L^2}{\pi^{2m}} \prod_{j\in J_1\setminus J_2}\left|\frac{\mu_j+1/2+it}{4}\right| \prod_{j\in J_2}\frac{1}{4}\prod_{j\not \in J_1}\left|\frac{\left(\mu_j-3/2+it\right)\left(\mu_j+1/2+it\right)}{4}\right|\\
        &<e^m\frac{N_L^2}{(2\pi)^{2m}} \prod_{j=1}^m \left|Q_j-\frac{3}{2}+it\right|^2
    \end{align*}
    where $Q_j=\mu_j+\frac{5+\sqrt{13}}{2}$. We obtain the desired estimate by applying Lemma~\ref{lemma Rademacher} with $a=-3/2$, $b=5/2$, $A=e^m N_L^2(2\pi)^{-2m}$, $B=e^m$, $\alpha=2$ and $\beta=0$. In general, since $|s-1|\leq |s-4|$ on the half-plane $\sigma\leq 5/2$, the lemma follows after we replace $L(s)$ by $L(s)(\frac{s-1}{s-4})^{r_L}$ in the above argument.
\end{proof}

We also prove the following lemma.

\begin{lemma}[c.f. {{\cite[Lemma 6]{HH00}}}]\label{lemma: log L(s_0)/L(s_1)}
    Let $s_0,s_1\in \bb{C}$ such that $\Re(s_1)=1+\vartheta+a$ and $|s_0-s_1|\leq (1-e^{-1/m})a$ for some $a>0$. Then
    \[
    \left|\log \frac{L(s_0)}{L(s_1)}\right|<1.
    \]
\end{lemma}

\begin{proof}
    We have
    \[
    \frac{\md}{\md s}\log L(s)=\frac{L'}{L}(s)=-\sum_{n=1}^\infty \frac{\Lambda_L(n)}{n^s \log n}, \qquad \sigma>1
    \]
    where $\Lambda_L(n)$ satisfies \eqref{generalized von mangoldt}.
    Since $\Re(s_0)$ and $\Re(s_1)$ are both $>1$, 
    \begin{align*}
        \frac{1}{|s_0-s_1|}\left|\log \frac{L(s_0)}{L(s_1)}\right| =&\left|\int_0^1 \frac{L'}{L}(s_1+t(s_0-s_1))\md t\right|\\
        \leq& \int_0^1 \sum_{n=1}^\infty \frac{|\Lambda_L(n)|}{n^{\Re(s_1)+t\Re(s_0-s_1)}}\md t\\
        \leq& \int_0^1 \sum_{n=1}^\infty \frac{m\Lambda(n)n^{\vartheta}}{n^{\Re(s_1)-t|s_0-s_1|}}\md t\\
        =& \int_0^1 -m\frac{\zeta'}{\zeta}(1+a-t|s_0-s_1|)\md t,
    \end{align*}
    where we invoked \eqref{generalized von mangoldt} to transition from $L'/L$ to 
    \[
    \frac{\zeta'}{\zeta}(s)=-\sum_{n=1}^\infty \frac{\Lambda(n)}{n^s}, \qquad \sigma>1.
    \]
    Using the inequality 
    \[
    -\frac{\zeta'}{\zeta}(\sigma)<\frac{1}{\sigma-1}, \qquad \sigma>1
    \]
    (a proof is included in the Appendix), we conclude that
    \begin{align*}
        \left|\log \frac{L(s_0)}{L(s_1)}\right|<& |s_0-s_1| \int_0^1 \frac{m}{a-t|s_0-s_1|}\md t
        \leq m\log\left(\frac{a}{a-|s_0-s_1|}\right) \leq 1.
    \end{align*}
\end{proof}

\subsection{Hyperbolic distance and preparation for Theorem~\ref{thm HH}}

The setup for Theorem~\ref{thm HH} comes from the Poincar\'{e} disk model in two-dimensional hyperbolic geometry. Let $G\subset \bb{C}$ be a simply connected domain and let $D=\{z\in \bb{C}: |z|<1\}$ be the open unit disk. For any two points $w_1,w_2\in G$, the Riemann mapping theorem implies that there exists a unique biholomorphic map from $G$ to $D$ that maps $w_1$ to 0 and $w_2$ to some $r\in (0,1)$. We define the hyperbolic distance between $w_1$ and $w_2$ (relative to $G$) to be
\begin{equation}\label{def hyp dis}
    d(w_1,w_2; G)=\frac{1}{2}\log \frac{1+r}{1-r},
\end{equation}
or equivalently, $d(w_1,w_2; G)=\tanh^{-1}(r)$ where $\tanh(x)=\frac{e^x-e^{-x}}{e^x+e^{-x}}$ is the hyperbolic tangent function. A useful fact is that expanding the domain decreases the hyperbolic distance between two points, that is, if $G\subset G'$, then $d(w_1,w_2; G)\geq d(w_1,w_2; G')$. This can be seen by applying the Schwarz\textendash Pick lemma to the holomorphic function formed by the composition $D\to G\xhookrightarrow{} G'\to D$.

For convenience we record several lemmas from \cite{HH00}. It is worth remarking that their proofs, which we omit here, are in fact quite short and elegant. The first is a Borel\textendash Carath\'{e}odory type result, proved using Hadamard's three circles theorem.

\begin{lemma}[{\cite[Theorem 4]{HH00}}]\label{lemma: caratheodory}
    Let $f:D\to \bb{C}$ be analytic with $f(0)=0$. Put 
    \[
    K(f)=\sup\{\Re(f(z)): z\in D\}
    \]
    and
    \[
    M(r,f)=\max\{|f(z)|: |z|\leq r\}, \qquad r<1.
    \]
    If
    \[
    M(r_1,f)=K(f)r_1^c
    \]
    for some $r_1\in (0,1)$ and $c>0$, then
    \[
    M(r_2,f)<2K(f)\frac{r_2^c}{1-r_2^c}
    \]
    for all $r_2\in (r_1,1)$.
\end{lemma}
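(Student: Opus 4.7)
My plan is to convert the hypothesis on $\Re f$ into a modulus bound by composing $f$ with a suitable Möbius transformation, and then propagate that bound outward via Hadamard's three circles theorem. Concretely, I would introduce
\[
F(z) = \frac{f(z)}{2K(f) - f(z)}.
\]
Since $\Re f \leq K(f)$ on $D$, the denominator has real part at least $K(f) > 0$, so $F$ is holomorphic on $D$; the identity $|2K(f)-f|^2 - |f|^2 = 4K(f)(K(f)-\Re f) \geq 0$ shows $|F|\leq 1$ throughout $D$, and $F(0)=0$. Inverting gives $f = 2K(f)F/(1+F)$ and hence $|f| \leq 2K(f)|F|/(1-|F|)$; since $x\mapsto x/(1-x)$ is increasing on $[0,1)$, this reduces the lemma to proving the strict inequality $M(r_2, F) < r_2^c$.

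Next I would bound $M(r_1, F)$ using the hypothesis. From $|2K(f)-f|\geq 2K(f)-\Re f \geq K(f)$ one gets the pointwise estimate $|F|\leq |f|/K(f)$, so the hypothesis $M(r_1,f) = K(f)r_1^c$ immediately yields $M(r_1, F) \leq r_1^c$. The inequality is in fact strict: equality at some point of $|z|=r_1$ would force $f$ to take the value $K(f)$ there, hence $\Re f \equiv K(f)$ on $D$ by the maximum principle, which contradicts $f(0)=0$ provided $K(f) > 0$ (the case $K(f)=0$ forces $f \equiv 0$ and is trivial).

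For the final step I would invoke Hadamard's three circles theorem, which asserts that $\log M(r, F)$ is a convex function of $\log r$. For any $R \in (r_2, 1)$, convexity on $[\log r_1, \log R]$ gives the standard interpolation $\log M(r_2, F) \leq \theta\log M(r_1, F) + (1-\theta)\log M(R, F)$ with $\theta = (\log R - \log r_2)/(\log R - \log r_1) \in (0,1)$. Because $|F|\leq 1$ on $D$, the second term is non-positive, and as $R\to 1^-$ the coefficient $\theta$ tends to $\log r_2/\log r_1 > 0$, collapsing the inequality to $\log M(r_2, F)\leq (\log r_2/\log r_1)\log M(r_1, F)$. Combined with the strict bound $\log M(r_1, F) < c\log r_1$ from the previous step, this yields $M(r_2, F) < r_2^c$, and plugging back into the first step completes the proof.

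The main conceptual ingredient is the choice of auxiliary function $F$: it simultaneously (i) translates the real-part hypothesis $\Re f \leq K(f)$ into the modulus bound $|F|\leq 1$, and (ii) admits an inverse whose structure $\rho/(1-\rho)$ matches the target bound $r^c/(1-r^c)$ exactly. Once $F$ is in hand, the remaining ingredients — a maximum-principle argument for strictness and three circles for the radial extension — are routine, so I do not anticipate any significant technical obstacle beyond recognizing the right $F$.
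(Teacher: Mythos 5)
Your proof is correct and follows the same route the paper attributes to Hall and Hayman: convert the bound $\Re f\leq K(f)$ into a modulus bound via the Möbius map $F=f/(2K(f)-f)$, then apply Hadamard's three circles theorem to propagate the bound from $r_1$ to $r_2$. The paper explicitly omits the proof but records that it ``is proved using Hadamard's three circles theorem,'' which is precisely your mechanism, and the small maximum-principle argument you add to secure strictness is a sound way to obtain the strict inequality in the conclusion (noting, as you do, that $K(f)>0$ is implicitly required for the statement to have content).
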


The next two lemmas are related to properties of hyperbolic distance in rectangular domains.
\begin{lemma}[{\cite[Lemma 2]{HH00}}]\label{lemma: hyper dist horiz strip}
    Let $s_0,s_1\in S=\{s: |t-T|<a\pi/4\}$ with $\Im(s_1)=T$. Then
    \[
    d(s_0,s_1;S)\geq \frac{|s_0-s_1|}{a}.
    \]
\end{lemma}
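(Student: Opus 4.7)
My plan is to proceed via the explicit hyperbolic metric density on $S$. By conformal invariance, $d(s_0,s_1;S)$ equals the infimum of the hyperbolic lengths of paths in $S$ joining the two points, where the hyperbolic density on $S$ is the pullback of $|dz|/(1-|z|^2)$ on $D$ through any Riemann map. (This is the correct normalization to match \eqref{def hyp dis}, since $\int_0^r dt/(1-t^2)=\tanh^{-1}(r)=\tfrac12\log\tfrac{1+r}{1-r}$.) Consequently, if I can show the density is at least $1/a$ everywhere on $S$, then for any path $\gamma$ from $s_0$ to $s_1$,
\[
d(s_0,s_1;S)\;\geq\;\frac{1}{a}\inf_{\gamma}\,\mathrm{length}(\gamma)\;\geq\;\frac{|s_0-s_1|}{a},
\]
and the lemma follows.

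The first step is to write an explicit Riemann map $\varphi:S\to D$ with $\varphi(s_1)=0$. Since $s_1$ lies on the center line $t=T$ of a horizontal strip of half-width $a\pi/4$, I would take the composition of the translation $s\mapsto s-s_1$, the rescaling $s\mapsto(s-s_1)/a$ (carrying $S$ to $\{|\Im u|<\pi/4\}$), and the conformal equivalence $u\mapsto\tanh u$ from $\{|\Im u|<\pi/4\}$ onto $D$. The latter factors as $u\mapsto e^{2u}$ (mapping $\{|\Im u|<\pi/4\}$ onto the right half-plane) followed by the Möbius map $v\mapsto(v-1)/(v+1)$ onto $D$. This yields
\[
\varphi(s)=\tanh\!\Big(\tfrac{s-s_1}{a}\Big),\qquad \varphi(s_1)=0.
\]

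Next I would compute the density $\rho(s)=|\varphi'(s)|/(1-|\varphi(s)|^2)$. Writing $u=(s-s_1)/a=x+iy$, the derivative gives $|\varphi'(s)|=1/(a|\cosh u|^2)$, and two elementary trigonometric identities, namely $|\cosh u|^2=\sinh^2 x+\cos^2 y$ and $1-|\tanh u|^2=\cos(2y)/|\cosh u|^2$, collapse everything to
\[
\rho(s)=\frac{1}{a\cos\!\big(2(t-T)/a\big)}.
\]
The hypothesis $|t-T|<a\pi/4$ forces $2(t-T)/a\in(-\pi/2,\pi/2)$, so the cosine factor lies in $(0,1]$ and $\rho(s)\geq 1/a$ throughout $S$, which by the paragraph above completes the proof.

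There is no real obstacle here; the only points needing care are verifying that $\tanh$ (rather than some rescaling such as $\tanh(2u)$) is the correct Riemann map from $\{|\Im u|<\pi/4\}$ onto $D$ — this is precisely what makes the factor $a$ rather than $a/2$ appear — and aligning the density $|dz|/(1-|z|^2)$ on $D$ with the normalization of \eqref{def hyp dis} so that the bound $\rho\geq 1/a$ directly implies the stated Euclidean bound.
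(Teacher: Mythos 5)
Your argument is correct. Since the paper itself omits the proof of this lemma (it simply cites \cite[Lemma 2]{HH00}), there is no in-text proof to compare against, but your approach via the explicit Poincar\'e density is the canonical one and, as far as I can tell, essentially what Hall and Hayman do. The computation checks out: with $\varphi(s)=\tanh\bigl((s-s_1)/a\bigr)$ one has $|\varphi'(s)|=1/\bigl(a|\cosh u|^2\bigr)$ and, using $|\cosh u|^2=\sinh^2 x+\cos^2 y$, $|\sinh u|^2=\sinh^2 x+\sin^2 y$, the identity $1-|\tanh u|^2=\cos(2y)/|\cosh u|^2$ follows, yielding the density $\rho(s)=1/\bigl(a\cos(2(t-T)/a)\bigr)\geq 1/a$ on $S$; integrating along paths then gives the stated lower bound. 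Two small remarks worth making explicit if you write this up: the normalization of the density to $|dz|/(1-|z|^2)$ (rather than $2|dz|/(1-|z|^2)$) is exactly what is needed so that $d(0,r;D)=\tanh^{-1}r$ matches \eqref{def hyp dis}; and the hypothesis $\Im(s_1)=T$ is used only to make $\varphi(s)=\tanh\bigl((s-s_1)/a\bigr)$ a genuine Riemann map of $S$ onto $D$ with center mapping to center — the density bound $\rho\geq 1/a$ is an intrinsic property of $S$ and in fact the conclusion $d(s_0,s_1;S)\geq|s_0-s_1|/a$ holds for any two points of $S$, so your proof quietly establishes a slightly stronger statement.
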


\begin{lemma}[{\cite[Lemma 5]{HH00}}]\label{lemma: hyper dist rectangle}
    Let $\sigma_0>1$ and $R=\{s: 0<\sigma<\sigma_0, |t|\leq \pi/4\}$. Then for all $(\log 2)/2<x\leq \sigma_0/2$,
    \[
    d(x,\sigma_0-x;R)<\sigma_0.
    \]
\end{lemma}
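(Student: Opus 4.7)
The plan is to uniformize the horizontal strip $\Sigma_h=\{s:|t|<\pi/4\}$ onto the open unit disk $D$ via the biholomorphism $w=\tanh(s)$. The identity
\[
\tanh(\sigma\pm i\pi/4)=\tanh(2\sigma)\pm i\,\mathrm{sech}(2\sigma)
\]
shows that the horizontal edges $t=\pm\pi/4$ of $R$ are mapped onto the upper and lower unit semicircles, and since $\tanh$ is injective on $\Sigma_h$ (its poles $\pm i\pi/2$ lie outside the strip), this is a true conformal equivalence. Under this map $R$ is sent to a subregion $\widetilde R\subset D$, and the real points $x,\sigma_0-x$ go to $p:=\tanh x$ and $q:=\tanh(\sigma_0-x)$, which satisfy $0<p\le q<\tau$ with $\tau:=\tanh\sigma_0$. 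Since the hyperbolic distance is a conformal invariant, $d(x,\sigma_0-x;R)=d(p,q;\widetilde R)$.

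Next I would describe $\widetilde R$ explicitly inside $D$. Because $\tanh$ is odd, it sends the left edge $\sigma=0$ onto the imaginary segment $(-i,i)$, and the M\"obius form $\tanh(\sigma_0+it)=(\tau+i\tan t)/(1+i\tau\tan t)$ shows that the right edge $\sigma=\sigma_0$ is mapped onto an arc of the circle $|w-c_0|=r_0$, where
\[
c_0=\frac{1+\tau^2}{2\tau},\qquad r_0=\frac{1-\tau^2}{2\tau}.
\]
Combined with the fact that $\tanh$ takes $\{\sigma>0\}\cap\Sigma_h$ biholomorphically onto $\{\Re w>0\}\cap D$, this yields
\[
\widetilde R=\{w\in D:\Re w>0,\;|w-c_0|>r_0\}.
\]

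The key step is then to exhibit the Euclidean disk $B:=\{w:|w-\tau/2|<\tau/2\}$ as a subdomain of $\widetilde R$ containing both $p$ and $q$. A short calculation gives $c_0-\tau/2=1/(2\tau)=r_0+\tau/2$, so $B$ is externally tangent to the circle $|w-c_0|=r_0$ at $w=\tau$; it is also tangent to the imaginary axis at $0$ and is contained in $|w|<\tau<1$. Thus $B\subset\widetilde R$, and since $p,q\in(0,\tau)\subset B$, the domain monotonicity of hyperbolic distance gives $d(p,q;\widetilde R)\le d(p,q;B)$. Pulling $B$ back to the unit disk via $w\mapsto(w-\tau/2)/(\tau/2)$ and applying the standard formula $d_D(a,b)=\tfrac12\log[(1-a)(1+b)/((1+a)(1-b))]$ for real $-1<a\le b<1$, together with the identity $\tanh A-\tanh B=\sinh(A-B)/(\cosh A\cosh B)$, I would obtain
\[
d(p,q;B)=\tfrac12\log\frac{q(\tau-p)}{p(\tau-q)}=\log\frac{\sinh(\sigma_0-x)}{\sinh x}.
\]

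Finally I would verify that $\log[\sinh(\sigma_0-x)/\sinh x]<\sigma_0$. Expanding the hyperbolic sines exponentially and rearranging reduces the inequality to $e^{2x}(1+e^{-2\sigma_0})>2$, which is implied by the hypothesis $x>(\log 2)/2$. The main obstacle in the plan is the explicit geometric identification of $\widetilde R$, and in particular the recognition that the image of the right edge $\sigma=\sigma_0$ is a circle which is externally tangent to the inscribed disk $B$; once these geometric facts are established everything else is routine algebra. It is reassuring that the constant $(\log 2)/2$ emerges as sharp: for $x$ close to $(\log 2)/2$ the bound $\log[\sinh(\sigma_0-x)/\sinh x]$ approaches $\sigma_0$ as $\sigma_0\to\infty$.
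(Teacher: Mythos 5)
The paper does not prove this lemma; it is quoted verbatim from Hall and Hayman \cite[Lemma~5]{HH00}, with the author explicitly omitting the proof. So there is no proof in the paper to compare against. That said, your argument is correct, and here is a verification of the steps you flagged as the main obstacles.

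The identity $\tanh(\sigma\pm i\pi/4)=\tanh(2\sigma)\pm i\,\mathrm{sech}(2\sigma)$ is right, and $\tanh^2(2\sigma)+\mathrm{sech}^2(2\sigma)=1$ confirms the horizontal edges land on $\partial D$. For the right edge, the M\"obius map $u\mapsto (\tau+iu)/(1+i\tau u)$ sends the real line to the circle through $\tau$ (at $u=0$) and $1/\tau$ (at $u=\infty$), symmetric about $\mathbb{R}$, so its center is $c_0=(\tau+1/\tau)/2=(1+\tau^2)/(2\tau)$ and radius $r_0=(1/\tau-\tau)/2=(1-\tau^2)/(2\tau)$, as you claimed. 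One also checks that this circle meets $\partial D$ exactly at the corner images $\tanh(\sigma_0\pm i\pi/4)$, and that $|0-c_0|=c_0>r_0$, so the image of $R$ is indeed the part of the right half-disk exterior to this circle. The inscribed disk $B$ works: $c_0-\tau/2=1/(2\tau)=r_0+\tau/2$ gives external tangency at $w=\tau$, $B\subset\{\Re w>0\}$ by tangency at $0$, and $B\subset D$ since its farthest point is $\tau<1$. The distance computation
\[
d(p,q;B)=\tfrac12\log\frac{(\tau-p)\,q}{p\,(\tau-q)}=\tfrac12\log\frac{\sinh^2(\sigma_0-x)}{\sinh^2 x}=\log\frac{\sinh(\sigma_0-x)}{\sinh x}
\]
follows exactly as you outline, and $\log[\sinh(\sigma_0-x)/\sinh x]<\sigma_0$ rearranges to $e^{2x}(1+e^{-2\sigma_0})>2$, which holds as soon as $e^{2x}>2$, i.e.\ $x>(\log 2)/2$. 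All normalizations are consistent with the paper's convention $d(w_1,w_2;G)=\tanh^{-1}r$. The proof is complete and gives the strict inequality required.
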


In the next two sections we prove Theorem~\ref{thm HH} and \ref{thm Siegel}.

\section{Proof of Theorem~\ref{thm HH}}

Throughout we assume (and may not explicitly mention at each instance) that $\frac{\log\log C_L(T)}{\log(m+2)}$ is large. 
Suppose that $L(s)$ has no non-trivial zero with $|\gamma-T|\leq \frac{\pi}{4}a(T)$ where 
\[
\frac{1}{\log\log\log C_L(T)}\leq a(T)\leq \frac{1}{10}.
\]
Put
\begin{equation}\label{def s_i}
    s_1=1+\vartheta+a(T)+iT, \quad s_3=\frac{1}{2}-a(T)+iT, \quad s_4=\frac{1}{2}+a(T)+iT.
\end{equation}
For now let $s_2$ be either $s_3$ or $s_4$; the exact choice will only matter later on.

Consider the rectangle
\begin{equation}\label{def: R rect}
    R=\left\{s: \frac{1}{2}-\frac{2+\log 2}{2}a(T)<\sigma<1+\vartheta+\frac{2+\log 2}{2}a(T), \: |t-T|<\frac{\pi}{4}a(T)\right\},
\end{equation}
which contains $s_1,s_2$ and no non-trivial zero. Lemma~\ref{lemma: hyper dist rectangle} gives (after rescaling) 
\[
d(s_1,s_2;R)<\frac{1}{a(T)}
\left(1+\vartheta+\frac{2+\log 2}{2}a(T)-\frac{1}{2}+\frac{2+\log 2}{2}a(T) \right)=\frac{1+2\vartheta}{2a(T)}+(2+\log 2).
\]
Take any Riemann mapping $\phi:D\to R$ such that $\phi(0)=s_1$ and put $r_2=|\phi^{-1}(s_2)|$. Then, by the definition of hyperbolic distance \eqref{def hyp dis} we have
\[
\frac{1}{2}\log \frac{1+r_2}{1-r_2}=d(s_1,s_2;R),
\]
so that
\begin{equation}\label{bound (1+r_2)/(1-r_2)}
    \frac{1+r_2}{1-r_2}<e^{(1+2\vartheta)/a(T)+(4+2\log 2)}.
\end{equation}
    
Next we define an auxiliary function that is analytic in $D$. Select the subset of indices 
\[
J=\left\{1\leq j\leq m: -1<\Re(\mu_j)<-\frac{1}{2}+\frac{2+\log 2}{2}a(T), \: |T+\Im(\mu_j)|<\frac{\pi}{4}a(T)\right\}
\]
and put
\begin{equation}\label{def r_L(T)}
    r_L(T)=
    \begin{cases}
        r_L &\text{if $|T|\leq 1$},\\
        0 &\text{if $|T|>1$}.
    \end{cases}
\end{equation}
Then define for $z\in D$
\begin{equation}\label{def f}
    f(z)=\log \frac{g(\phi(z))}{g(s_1)} \quad \text{where}\:\: g(s)=\frac{L(s)(s-1)^{r_L(T)}}{\prod_{j\in J}(s+\mu_j)}.
\end{equation}
Note that $g$ is analytic in $R$ because the trivial zeros of $L(s)$ and the possible pole at $s=1$ are all canceled out provided that they lie in $R$. Consequently $f$ is analytic in $D$ with $f(0)=0$. We establish two claims regarding the quantities $M(r,f)$ and $K(f)$ defined in Lemma~\ref{lemma: caratheodory}. 
\begin{lemma}\label{claim: M(r_1,f)}
    \[
    M(r_1,f)<4 \quad \text{where}\:\: r_1=\tanh(1-e^{-\frac{1}{m+1}}).
    \]
\end{lemma}
    
\begin{proof}
    Write $f(z)=f_1(z)+f_2(z)+f_3(z)$ where
    \[
    f_1(z)=\log \frac{L(\phi(z))}{L(s_1)},\quad f_2(z)=\log \frac{\prod_{j\in J}(s_1+\mu_j)}{\prod_{j\in J}(\phi(z)+\mu_j)}, \quad f_3(z)=r_L(T) \log \left(\frac{\phi(z)-1}{s_1-1}\right).
    \]
    We claim that 
    \begin{equation}\label{bound |s_0-s_1|}
        |\phi(z)-s_1|\leq (1-e^{-\frac{1}{m+1}})a(T)
    \end{equation}
    for all $|z|\leq r_1$ (the proof will be given later). Since $|s_1+\mu_j|>\vartheta+a(T)\geq a(T)$, it follows from \eqref{bound |s_0-s_1|} that \[
    |\phi(z)+\mu_j|\geq |s_1+\mu_j|-|s_1-\phi(z)|\geq e^{-\frac{1}{m+1}}a(T)>0,
    \]
    and similarly $|\phi(z)-1|>0$,
    so that each $f_i$ is analytic in $|z|\leq r_1$. Lemma~\ref{lemma: log L(s_0)/L(s_1)} and \eqref{bound |s_0-s_1|} imply that $M(r_1,f_1)\leq 1$.
    Next we show $M(r_1,f_2)<2$. Writing $\log(1+w)=\int_0^1 \frac{w}{1+wx}\md x$, it is not hard to verify that 
    \begin{equation}\label{|log(1+z)|}
        |\log(1+w)|\leq \frac{|w|}{1-|w|}, \qquad w\in \mathbb{C}, \:|w|\in [0,1).
    \end{equation}
    Therefore, since
    \[
    \left|\frac{s_1-\phi(z)}{\phi(z)+\mu_j}\right|\leq \frac{1-e^{-\frac{1}{m+1}}}{1-(1-e^{-\frac{1}{m+1}})}=e^{\frac{1}{m+1}}-1<1,
    \] 
    we obtain
    \[
    |f_2(z)|\leq \sum_{j\in J}\left|\log\left(1+\frac{s_1-\phi(z)}{\phi(z)+\mu_j}\right)\right|\leq m\frac{(e^{\frac{1}{m+1}}-1)}{1-(e^{\frac{1}{m+1}}-1)}<2
    \] 
    for all $m\geq 1$, as desired. Similarly, 
    \[
    |f_3(z)|\leq m\left|\log\left(1+\frac{\phi(z)-s_1}{s_1-1}\right)\right|\leq m(e^{\frac{1}{m+1}}-1)<1,
    \]
    and thus $M(r_1,f_3)<1$. Adding the contributions from each $f_i$ gives the claim.
    
    It remains to prove \eqref{bound |s_0-s_1|}. Since $R\subset S:=\{s: |t-T|<\frac{\pi}{4}a(T)\}$ and hyperbolic distance shrinks in expanding domains, we have
    \[
    d(\phi(z),s_1; R)\geq d(\phi(z),s_1; S)\geq \frac{|\phi(z)-s_1|}{a(T)}
    \]
    where we applied Lemma~\ref{lemma: hyper dist horiz strip} for the second inequality. Hence, if \eqref{bound |s_0-s_1|} fails for some $z$ with $|z|\leq r_1$, then
    \[
    d(\phi(z),s_1;R)>1-e^{-\frac{1}{m+1}}=\frac{1}{2}\log\frac{1+r_1}{1-r_1}\geq \frac{1}{2}\log\frac{1+|z|}{1-|z|}=d(\phi(z),s_1;R),
    \]
    where a contradiction arises. 
\end{proof}
    
\begin{lemma}\label{claim: K(f)}
    \[
    K(f)<\frac{3}{2}\log C_L(T).
    \]
\end{lemma}
\begin{proof}
    By definition, 
    \begin{equation}\label{expression K(f)}
        K(f)=\sup_{s\in R} \log \left|\frac{L(s)(s-1)^{r_L(T)}}{\prod_{j\in J} (s+\mu_j)}\right|+\sum_{j\in J}\log|s_1+\mu_j|-\log |L(s_1)|-r_L(T)\log|s_1-1|.
    \end{equation}
    Fix $s\in R$. According to Lemma~\ref{lemma: |L(s)| bound}, the first quantity does not exceed
    \[
    m+\frac{5/2-\sigma}{2} \log \Bigg|N_L\prod_{j=1}^m \frac{s+\frac{5+\sqrt{13}}{2}+\mu_j}{2\pi}\Bigg|+
    \begin{cases}
        r_L\log|s-4| & \text{if $|T|\leq 1$},\\
        r_L\log \left|\dfrac{s-4}{s-1}\right| & \text{if $|T|>1$}.
    \end{cases}
    \]
    This estimate (especially the coefficient $\frac{5/2-\sigma}{2}$) is crude and can be improved with more effort, but it suffices for our purposes. The second term above is at most
    \[
    \left(1+\frac{2+\log 2}{4}a(T)\right)\log C_L(T)<1.1\log C_L(T)
    \]
    since $a(T)\leq 1/10$, and the third term is $O(m)$ in either case.
    
    For the last three terms in \eqref{expression K(f)}, we estimate $\sum_{j\in J}\log|s_1+\mu_j|=O(m)$,
    \[
    -r_L(T)\log|s_1-1|\leq m\log \frac{1}{a(T)}\leq m\log\log\log\log C_L(T),
    \]
    and
    \begin{align}\label{log L(s_1) upper bound}
        |\log L(s_1)|\leq& \sum_{n=1}^\infty \frac{m\Lambda(n)n^\vartheta}{n^{1+\vartheta+a(T)}\log n}=m\log \zeta(1+a(T))
        <m\log \frac{1+a(T)}{a(T)}\notag \\
        <& 2m\log\log\log\log C_L(T),
    \end{align}
    where we used \eqref{generalized von mangoldt} as well as the inequality $\zeta(\sigma)< \frac{\sigma}{\sigma-1}$, $\sigma>1$ (see, e.g., \cite[Corollary 1.14]{MV}).
    Putting these estimates together, we obtain 
    \[
    K(f)<1.1 \log C_L(T)+O(m\log\log\log\log C_L(T)).
    \]
    Since $\frac{\log\log C_L(T)}{\log(m+2)}$ is assumed to be large, the error term is $o(\log C_L(T))$. This finishes the proof.
\end{proof}

Recall that we defined $s_3$ and $s_4$ in \eqref{def s_i} and it did not matter whether $s_2=s_3$ or $s_2=s_4$ in the arguments above. We now determine $s_2$. Let $g(s)$ be as in \eqref{def f}. Since $s_3=1-\overline{s_4}$, 
we find by using the functional equation \eqref{functional equation} and the gamma relation \eqref{gamma relation}
that
\begin{multline}\label{ratio L(s3) L(s4)}
    \left|\frac{g(s_3)}{g(s_4)}\right|=\frac{N_L^{a(T)}}{\pi^{m \cdot a(T)}}  \prod_{1\leq j\leq m} \left|\frac{\Gamma \big(\frac{5/2+a(T)+iT+\mu_j}{2}\big)}{\Gamma\big(\frac{5/2-a(T)+iT+\mu_j}{2}\big)}\right| \prod_{j\not \in J} \left|\frac{1/2-a(T)+iT+\mu_j}{1/2+a(T)+iT+\mu_j}\right|\\
    \times \left|\frac{-1/2-a(T)+iT}{-1/2+a(T)+iT}\right|^{r_L(T)}.
\end{multline}
By the mean-value theorem,
\begin{align*}
    \log \left|\frac{g(s_3)}{g(s_4)}\right|=a(T)&\Bigg[\log \left(\frac{N_L}{\pi^m}\right)+\sum_{j=1}^m \Re\frac{\Gamma'}{\Gamma}\left(\frac{5/2+\sigma^*_j+iT+\mu_j}{2}\right)\\
    &+2\sum_{j\not \in J}\Re\left(\frac{1}{1/2+\tau^*_j+iT+\mu_j}\right)+2r_L(T)\Re\left(\frac{1}{-1/2+\kappa^*+iT}\right)\Bigg],
\end{align*}
where $\sigma^*_j,\tau^*_j,\kappa^*\in [-a(T),a(T)]$.  
Applying Stirling's formula in the form
\begin{equation}\label{Stirling}
    \frac{\Gamma'}{\Gamma}(s)=\log(1+s)-\frac{1}{s}+O(1), \qquad \sigma>-1+\delta
\end{equation}
and using the definition of $J$, we see that
\begin{align*}
    |\log g(s_3)|+|\log g(s_4)|\geq & a(T)\left(\log \left(\frac{N_L}{\pi^m}\right)+\sum_{j=1}^m \big[\log(|iT+\mu_j|+3)+O(1)\big]+O\left(\frac{m}{a(T)}\right)\right)\\
    =& a(T)\log C_L(T)+O(m)\\
    \geq & \frac{2a(T)}{3}\log C_L(T).
\end{align*}
Now choose $s_2$ to be either $s_3$ or $s_4$ such that
\begin{equation}\label{log L(s_2) lower bound}
    |\log g(s_2)|\geq \frac{a(T)}{3}\log C_L(T).
\end{equation}

For the last step of the proof, we consider two possible cases for $K(f)$ and shall see that the desired conclusion follows in either case. 
    
\bigskip
\noindent \textbf{Case 1}. 
\begin{equation}\label{assumption case 1}
    K(f)<\frac{\log C_L(T)}{4(\log\log C_L(T))^2}.
\end{equation}
On the one hand, in view of \eqref{log L(s_1) upper bound} and \eqref{log L(s_2) lower bound}, 
\begin{equation}\label{lower bound M(r_2,f)}
    M(r_2,f)\geq |f(\phi^{-1}(s_2))|=\left|\log \frac{g(s_2)}{g(s_1)}\right|\geq \frac{a(T)}{4}\log C_L(T).
\end{equation}
On the other hand, by the Borel\textendash Carath\'{e}odory theorem, inequality \eqref{bound (1+r_2)/(1-r_2)} and assumption \eqref{assumption case 1},
\[
M(r_2,f)\leq \frac{2r_2}{1-r_2}K(f)<\frac{1+r_2}{1-r_2}K(f)<e^{(1+2\vartheta)/a(T)+(4+2\log 2)} \frac{\log C_L(T)}{4(\log\log C_L(T))^2}.
\]
Combining the upper and lower bounds yields
\[
\frac{1+2\vartheta}{a(T)}+(4+2\log 2) > 2\log\log\log C_L(T),
\]
that is,
\begin{equation}\label{bound a(T) case 1}
    a(T)<\frac{1+2\vartheta}{2\log\log\log C_L(T)-(4+2\log 2)}.
\end{equation}

\bigskip
\noindent \textbf{Case 2}.
\begin{equation}\label{assumption case 2}
K(f)\geq \frac{\log C_L(T)}{4(\log\log C_L(T))^2}.
\end{equation}
We see from Lemma~\ref{claim: M(r_1,f)} and \eqref{lower bound M(r_2,f)} that $M(r_1,f)<M(r_2,f)$, and so $r_1<r_2$. By \eqref{lower bound M(r_2,f)}, Lemma~\ref{lemma: caratheodory}, and Lemma~\ref{claim: K(f)},
\begin{equation}\label{eqn upper bound c}
    \frac{a(T)}{4}\log C_L(T)\leq M(r_2,f)<2K(f)\frac{r_2^c}{1-r_2^c}<3\log C_L(T)\frac{r_2^c}{1-r_2^c}
\end{equation}
where $c>0$ satisfies
\begin{equation}\label{eqn lower bound on c}
    M(r_1,f)=K(f)r_1^c.
\end{equation}
We first extract a lower bound on $c$ from \eqref{eqn lower bound on c}. By Lemma~\ref{claim: M(r_1,f)} and assumption \eqref{assumption case 2},
\begin{equation}\label{bound r_1^c}
    r_1^c\leq \frac{16(\log\log C_L(T))^2}{\log C_L(T)}.
\end{equation}
One can verify the inequality
\[
\tanh(x)>\frac{\log(\frac{1}{1-x})}{1+\log(\frac{1}{1-x})}, \qquad 0<x<0.936\ldots,
\]
which implies $1/r_1<m+2$ after substituting $x=1-e^{-\frac{1}{m+1}}$. We then find from \eqref{bound r_1^c} that
\begin{equation}\label{lower bound on c}
    c>\frac{\log\log C_L(T)-2\log\log\log C_L(T)-\log 16}{\log(m+2)}.
\end{equation}
To derive an upper bound on $c$, note that \eqref{eqn upper bound c} gives
\[
r_2^c>\frac{a(T)}{13},
\]
and thus
\begin{equation}\label{upper bound on c}
    c<\frac{\log(13/a(T))}{\log(1/r_2)}<\log\left(\frac{13}{a(T)}\right)\frac{1+r_2}{2(1-r_2)}<\log\left(\frac{13}{a(T)}\right)\frac{1}{2}e^{(1+2\vartheta)/a(T)+(4+2\log 2)}
\end{equation}
by \eqref{bound (1+r_2)/(1-r_2)}. Combining \eqref{lower bound on c} and \eqref{upper bound on c}, we arrive at
\[
\frac{1+2\vartheta}{a(T)}+(4+\log 2)+\log\log\left(\frac{13}{a(T)}\right)>\log\left(\frac{\log\log C_L(T)}{\log(m+2)}\right)+o(1)
\]
as $C_L(T)\to \infty$, so that
\[
a(T)<\dfrac{(1+2\vartheta)}{\log\left(\frac{\log\log C_L(T)}{\log(m+2)}\right)-\log\log\log\left(\frac{\log\log C_L(T)}{\log(m+2)}\right)-(4+\log 2+o(1))}
\]
as $\frac{\log\log C_L(T)}{\log(m+2)}\to \infty$. Since this is weaker than the conclusion \eqref{bound a(T) case 1} of Case 1, the proof of Theorem~\ref{thm HH} is complete.

We now turn to the proof of Theorem~\ref{thm Siegel}.

\section{Proof of Theorem~\ref{thm Siegel}}

We need the following lemma of Siegel (see \cite[pp. 419]{Sie45}):
\begin{lemma}\label{lemma Siegel}
    Let $\lambda>0$, $0<\xi<1$, $0<M_0<M$. Suppose that $f(z)$ is analytic in the rectangle $\{z=x+iy: 0\leq x\leq 1, -\frac{1}{2\lambda}\leq y\leq \frac{1}{2\lambda}\}$ such that $\Re f(z)\leq M$ throughout this rectangle and $|f(z)|\leq M_0$ on the right edge $x=1$. Then
    \[
    \dfrac{\log \left|\dfrac{2M}{f(\xi)}-1\right|}{\log \left|\dfrac{2M}{M_0}-1\right|}\geq \frac{\sinh(\pi\lambda\xi)}{\sinh(\pi\lambda)}.
    \]
\end{lemma}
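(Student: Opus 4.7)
My plan is a Phragm\'{e}n\textendash Lindel\"{o}f style argument that compares $\log|2M/f(z) - 1|$ to an explicit harmonic majorant tailored to the rectangle.

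First I would recast the hypothesis in terms of an analytic function bounded by $1$. Since $\Re f(z) \leq M$ gives $\Re(2M - f(z)) \geq M > 0$, the function
\[
g(z) := \frac{f(z)}{2M - f(z)}
\]
is analytic on the rectangle. The identity $|2M - f|^2 - |f|^2 = 4M(M - \Re f) \geq 0$ shows $|g(z)| \leq 1$ throughout, and on the right edge $x = 1$ the hypothesis $|f| \leq M_0$ improves this to $|g(z)| \leq M_0/(2M - M_0)$, i.e., $\log|g(z)| \leq -L$ where $L := \log(2M/M_0 - 1) > 0$. Since $\log|2M/f(z) - 1| = -\log|g(z)|$, the desired inequality becomes
\[
\log|g(\xi)| \leq -L\,\frac{\sinh(\pi\lambda\xi)}{\sinh(\pi\lambda)}.
\]

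Next I would construct the harmonic function realising exactly the ratio on the right-hand side. The function
\[
\phi(x + iy) := \frac{\sinh(\pi\lambda x)\cos(\pi\lambda y)}{\sinh(\pi\lambda)}
\]
is the real part of $\sinh(\pi\lambda z)/\sinh(\pi\lambda)$ and hence harmonic on $\bb{C}$. On the rectangle $\{0 \leq x \leq 1,\, -1/(2\lambda) \leq y \leq 1/(2\lambda)\}$ it vanishes on the left edge $x = 0$ and on the two horizontal edges (since $\cos(\pm \pi/2) = 0$), while on the right edge $x = 1$ it equals $\cos(\pi\lambda y) \in [0,1]$. At the interior point $z = \xi$ it specialises to the target value $\sinh(\pi\lambda\xi)/\sinh(\pi\lambda)$.

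Finally I would apply the maximum principle to the subharmonic function
\[
h(z) := \log|g(z)| + L\,\phi(z).
\]
On the left edge and the two horizontal edges, $\phi = 0$ and $\log|g| \leq 0$, giving $h \leq 0$; on the right edge, $\log|g| \leq -L$ and $L\phi \leq L$, giving $h \leq 0$ again. Since $h$ is subharmonic (sum of the subharmonic $\log|g|$ and the harmonic $L\phi$), the maximum principle forces $h(\xi) \leq 0$, which rearranges to the claim. I do not anticipate a serious obstacle: the sole technicality is that $\log|g|$ may equal $-\infty$ at zeros of $f$, which is harmless for subharmonic functions, and the genuine content of the argument is the identification of the harmonic majorant $\phi$, essentially forced by requiring it to be harmonic on the rectangle, vanish on three of its four sides, and reduce to $\sinh(\pi\lambda x)/\sinh(\pi\lambda)$ on the real axis.
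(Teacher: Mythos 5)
The paper does not prove this lemma; it is quoted directly from Siegel's 1945 paper, so there is no in-paper argument to compare yours against. Your proof, however, is correct and complete.

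The Cayley-type substitution $g = f/(2M-f)$ works exactly as you claim: $\Re(2M-f)\geq M>0$ keeps the denominator away from zero, the identity $|2M-f|^2-|f|^2=4M(M-\Re f)\geq 0$ gives $|g|\leq 1$ on the whole rectangle, and on $x=1$ the two bounds $|f|\leq M_0$, $|2M-f|\geq 2M-M_0$ give $\log|g|\leq -L$ with $L=\log(2M/M_0-1)>0$. Since $\log|2M/f(\xi)-1|=-\log|g(\xi)|$, the claim reduces to $\log|g(\xi)|\leq -L\,\phi(\xi)$, and your harmonic function $\phi=\Re\bigl(\sinh(\pi\lambda z)/\sinh(\pi\lambda)\bigr)$ has exactly the right boundary behaviour: it vanishes on the three sides where you only know $\log|g|\leq 0$, lies in $[0,1]$ on the right edge where you know $\log|g|\leq -L$, and evaluates to $\sinh(\pi\lambda\xi)/\sinh(\pi\lambda)$ at $\xi$. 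Applying the maximum principle to the subharmonic function $\log|g|+L\phi$ (upper semicontinuous on the closed rectangle, $\leq 0$ on its boundary) then gives the result, with $\log|g|=-\infty$ at zeros of $f$ being harmless as you note. In effect $\phi$ is a minorant for the harmonic measure of the right edge, so this is a slightly sharpened, conformal-map-free form of the two-constants theorem; that is almost certainly the same mechanism underlying Siegel's original proof (the $\sinh$ ratio is a fingerprint of precisely this harmonic function), and your version is self-contained and clean. No gaps.
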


We now proceed to the proof of Theorem~\ref{thm Siegel}. Throughout it is assumed that $C_L(T)^{1/m}$ is large. Suppose that $L(s)$ has no non-trivial zero with $|\gamma-T|\leq b(T)$ where
\[
\frac{1}{\log\log\log C_L(T)^{1/m}}\leq b(T)\leq \frac{1}{10}.
\]
Set 
\[
\sigma_0=1+\vartheta+2b(T), \quad s_0=\sigma_0+iT, \quad\lambda=\frac{\sigma_0+3/2}{2b(T)}, \quad z=\frac{s+3/2-iT}{\sigma_0+3/2}.
\]
Further let $d=\mathrm{ord}_{s=0}L(s)$ and
\[
J_1=\{1\leq j\leq m: \mu_j\neq 0, \: -1<\Re(\mu_j)\leq 3/2\},
\]
\[
J_2=\{1\leq j\leq m: \mu_j\neq 0, \: -1<\Re(\mu_j)\leq -1/2\}.
\]
We work with the function
\[
f(z)=\log \frac{g(s)}{g(s_0)} \quad \text{where}\:\: g(s)=\frac{L(s)(s-1)^{r_L(T)}}{s^d \prod_{j\in J_1}(s+\mu_j) \prod_{j\in J_2}(s+2+\mu_j)}
\]
(here, as in \eqref{def r_L(T)}, $r_L(T)=r_L$ if $|T|\leq 1$ and 0 otherwise). In particular, $f(z)$ is holomorphic in the rectangle 
\[
Q=\left\{z: 0\leq x\leq 1, \:-\frac{1}{2\lambda}\leq y\leq \frac{1}{2\lambda}\right\}=\{s: -3/2\leq \sigma\leq \sigma_0, \:|t-T|\leq b(T)\}.
\]

Let $z$ be a point on the right edge of $Q$. Following the proof of Lemma~\ref{lemma: log L(s_0)/L(s_1)}, we find that
\[
\left|\log \frac{L(s)}{L(s_0)}\right|\leq m\log\left(\frac{2b(T)}{2b(T)-|s-s_0|}\right)\leq m \log 2.
\]
Furthermore, note that
\[
\left|\frac{s_0-s}{s+\mu_j}\right|<\frac{b(T)}{\vartheta+2b(T)}\leq \frac{1}{2}
\]
for any $\mu_j$, and hence 
\begin{align*}
    &\left|\log \left(\left(\frac{s_0}{s}\right)^d \prod_{j\in J_1}\frac{s_0+\mu_j}{s+\mu_j} \prod_{j\in J_2}\frac{s_0+2+\mu_j}{s+2+\mu_j}\left(\frac{s-1}{s_0-1}\right)^{r_L(T)}\right)\right|\\
    &\hspace{1cm} \leq
    d\left|\log\left(1+\frac{s_0-s}{s}\right)\right|+\sum_{j\in J_1}\left|\log\left(1+\frac{s_0-s}{s+\mu_j}\right)\right|+\sum_{j\in J_2}\left|\log\left(1+\frac{s_0-s}{s+2+\mu_j}\right)\right|\\
    &\hspace{5cm}+m\left|\log\left(1+\frac{s-s_0}{s_0-1}\right)\right|\\
    &\hspace{1cm} <3m,
\end{align*}
where we used \eqref{|log(1+z)|} to bound the logarithms. We can therefore choose 
\begin{equation}\label{M_0 Siegel}
    M_0=4m
\end{equation}
in Lemma~\ref{lemma Siegel}.

Now let $z$ be an arbitrary point in $Q$. In a way similar to the proof of Lemma~\ref{claim: K(f)}, another application of Lemma~\ref{lemma: |L(s)| bound} yields 
\begin{align*}
    \Re \log g(s)<& m+\frac{(5/2-\sigma)}{2} \log C_L(T)+
    \begin{cases}
        r_L\log|s-4| & \text{if $|T|\leq 1$},\\
        r_L\log \left|\dfrac{s-4}{s-1}\right| & \text{if $|T|>1$}.
    \end{cases}\\
    \leq& 2\log C_L(T)+O(m).
\end{align*}
On the other hand, 
\[
\log \left|\frac{s_0^d}{(s_0-1)^{r_L(T)}} \prod_{j\in J_1}(s_0+\mu_j) \prod_{j\in J_2}(s_0+2+\mu_j)\right|<3\log C_L(T)
\]
and (as in \eqref{log L(s_1) upper bound})
\[
|\log L(s_0)|\leq \sum_{n=1}^\infty \frac{m\Lambda(n)n^\vartheta}{n^{1+\vartheta+2b(T)}\log n}=m\log \zeta(1+2b(T))
<m\log\log\log\log C_L(T)^{1/m}.
\]
Thus we may take 
\begin{equation}\label{M Siegel}
    M=6\log C_L(T),
\end{equation}
say.

Next, set 
\[
s_1=1-\overline{s_0}=1-\sigma_0+iT, \qquad \xi=\frac{s_1+3/2-iT}{\sigma_0+3/2}=\frac{5/2-\sigma_0}{\sigma_0+3/2}.
\]
In particular, we have $0<\xi<1$. Put $d'=\#\{1\leq j\leq m: \mu_j=0\}$ (observe that $0\leq d\leq d'$ since $L(1)\neq 0$ and $d'-d=r_L$). By the functional equation and \eqref{gamma relation},
\begin{align*}
    \Re f(\xi)=& \log \left|\frac{L(s_1)s_0^d (s_1-1)^{r_L(T)}\prod_{j\in J_1}(s_0+\mu_j)\prod_{j\in J_2}(s_0+2+\mu_j)}{L(s_0) s_1^d (s_0-1)^{r_L(T)} \prod_{j\in J_1}(s_1+\mu_j)\prod_{j\in J_2}(s_1+2+\mu_j)}\right|\\
    =&\left(\sigma_0-\frac{1}{2}\right)\log \frac{N_L}{\pi^m}+\sum_{j\in J_1\setminus J_2} \log \left|\frac{\Gamma(\frac{2+s_0+\mu_j}{2})}{\Gamma(\frac{2+s_1+\mu_j}{2})}\right|+\sum_{j\in J_2} \log \left|\frac{\Gamma(\frac{4+s_0+\mu_j}{2})}{\Gamma(\frac{4+s_1+\mu_j}{2})}\right|\\
    &+\sum_{\substack{\mu_j\neq 0\\j\not \in J_1}}\log\left|\frac{\Gamma(\frac{s_0+\mu_j}{2})}{\Gamma(\frac{s_1+\mu_j}{2})}\right|+d \log \left|\frac{\Gamma(\frac{2+s_0}{2})}{\Gamma(\frac{2+s_1}{2})}\right|+(d'-d) \log \left|\frac{\Gamma(\frac{s_0}{2})}{\Gamma(\frac{s_1}{2})}\right|+r_L(T)\log\left|\frac{s_1-1}{s_0-1}\right|.
\end{align*}
The last term vanishes if $|T|>1$. Otherwise, observe that $|s_0-1|=|s_1|$ and $|s_1-1|=|s_0|$, so the last three terms can be combined into
\begin{equation}\label{combine indices}
    d'\log \left|\frac{\Gamma(\frac{2+s_0}{2})}{\Gamma(\frac{2+s_1}{2})}\right|.
\end{equation}
Since $s_0-s_1=2\sigma_0-1$, by the mean-value theorem 
\[
\Re f(\xi)=\left(\sigma_0-\frac{1}{2}\right) \left(\log\frac{N_L}{\pi^m}+\sum_{j=1}^m \Re\frac{\Gamma'}{\Gamma}\left(\frac{\sigma^*_j+iT+\mu_j}{2}\right)\right)
\]
where $\sigma^*_j$ are real numbers lying in corresponding intervals (e.g., $2+\Re(s_1)\leq \sigma^*_j\leq 2+\sigma_0$ for $j\in J_1\setminus J_2$). The goal is to show that the above is $\gg \log C_L(T)$. For each $\mu_j\neq 0$, we check case by case that $\sigma_j^*+\Re(\mu_j)>1/4$, and hence by Stirling's formula \eqref{Stirling}
\[
\Re\frac{\Gamma'}{\Gamma}\left(\frac{\sigma^*_j+iT+\mu_j}{2}\right) \geq \log(3+|iT+\mu_j|)+O(1).
\]
It remains to treat the $d'$ indices $j$ with $\mu_j=0$. If $|T|\geq 1$ we have
\[
\Re\frac{\Gamma'}{\Gamma}\left(\frac{\sigma^*_j+iT}{2}\right) \geq \log(3+|T|)+O(1)-\frac{\sigma_j^*}{{\sigma_j^*}^2+T^2}\geq \log(3+|T|)+O(1).
\]
If $|T|\leq 1$, by \eqref{combine indices} each of these indices satisfies $\sigma^*_j\geq 2+\Re(s_1)=2-\vartheta-2b(T)>1/2$, so the same conclusion holds. It follows that
\begin{equation}\label{f(xi) Siegel}
    \Re f(\xi)\geq \left(\sigma_0-\frac{1}{2}\right)\log C_L(T)+O(m) > \frac{\log C_L(T)}{3}.
\end{equation}

Putting \eqref{M_0 Siegel}, \eqref{M Siegel} and \eqref{f(xi) Siegel} together, we arrive at
\[
\log \left|\frac{2M}{f(\xi)}-1\right|\leq \log \left|\frac{2M}{\Re f(\xi)}-1\right|< \log \left(\frac{2\cdot 6}{1/3}-1\right)=O(1),
\]
\[
\log \left|\frac{2M}{M_0}-1\right|=\log \left(\frac{2\cdot 6}{4}\log C_L(T)^{1/m}-1\right)=\log\log C_L(T)^{1/m}+O(1).
\]
According to Lemma~\ref{lemma Siegel}, we have $A\geq B$ where
\[
A=\log \left|\frac{2M}{f(\xi)}-1\right| \bigg / \log \left|\frac{2M}{M_0}-1\right| = O\left(\frac{1}{\log\log C_L(T)^{1/m}}\right),
\]
\[
B=\frac{\sinh(\pi\lambda\xi)}{\sinh(\pi\lambda)}=\frac{e^{\pi\lambda\xi}-e^{-\pi\lambda\xi}}{e^{\pi\lambda}-e^{-\pi\lambda}}=e^{\pi\lambda(\xi-1)}+O(e^{-\pi \lambda(\xi+1)})=e^{(1/2-\sigma_0)\pi/b(T)}+O(e^{-2\pi/b(T)}),
\]
so that
\[
b(T)\leq \frac{\pi}{2}\frac{1+2\vartheta+4b(T)}{\log\log\log C_L(T)^{1/m}+O(1)}.
\]
The stated estimate in Theorem~\ref{thm Siegel} follows readily.

\section{Acknowledgment}
The author would like to thank Neea Paloj\"{a}rvi and the anonymous referees for their valuable suggestions.

\printbibliography

\appendix
\section*{Appendix}
\begin{lemma}
    \begin{equation}\label{log deri zeta}
    -\frac{\zeta'}{\zeta}(\sigma)<\frac{1}{\sigma-1}, \qquad \sigma>1.
    \end{equation}
\end{lemma}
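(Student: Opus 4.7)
The inequality \eqref{log deri zeta} is equivalent to the strict monotonicity of $F(\sigma) := (\sigma-1)\zeta(\sigma)$ on $(1,\infty)$, because $F>0$ there and
\[
\frac{\md}{\md\sigma}\log F(\sigma) = \frac{1}{\sigma-1} + \frac{\zeta'(\sigma)}{\zeta(\sigma)}.
\]
Thus the task reduces to showing $F'(\sigma)>0$ for every $\sigma>1$.

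To establish this, I would start from the Mellin integral representation $\Gamma(\sigma)\zeta(\sigma) = \int_0^\infty x^{\sigma-1}(e^x-1)^{-1}\md x$ together with $\Gamma(\sigma)=(\sigma-1)\Gamma(\sigma-1)$, which give
\[
F(\sigma) = \frac{1}{\Gamma(\sigma-1)}\int_0^\infty \frac{x^{\sigma-1}}{e^x-1}\md x, \qquad \sigma>1.
\]
Differentiating $\log F$ and invoking the standard integral formula for $\Gamma'(\sigma-1)/\Gamma(\sigma-1)$, I would express $F'(\sigma)/F(\sigma)$ as the difference of two weighted averages of $\log x$:
\[
\frac{F'(\sigma)}{F(\sigma)} = \frac{\int_0^\infty x^{\sigma-1}(e^x-1)^{-1}\log x\md x}{\int_0^\infty x^{\sigma-1}(e^x-1)^{-1}\md x} - \frac{\int_0^\infty x^{\sigma-2}e^{-x}\log x\md x}{\int_0^\infty x^{\sigma-2}e^{-x}\md x}.
\]

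To conclude that this difference is positive, I would apply Chebyshev's integral (correlation) inequality with common weight $\rho(x) = x^{\sigma-2}e^{-x}$ and the strictly increasing functions $\phi(x) = x/(1-e^{-x})$ and $h(x) = \log x$. The key identity $\phi(x)\rho(x) = x^{\sigma-1}/(e^x-1)$ makes the Chebyshev inequality
\[
\int_0^\infty \phi h \rho\, \md x \cdot \int_0^\infty \rho\, \md x > \int_0^\infty \phi\rho\, \md x \cdot \int_0^\infty h\rho\, \md x
\]
rearrange exactly into $F'(\sigma)/F(\sigma)>0$. The main step requiring care is verifying the strict monotonicity of $\phi$, which reduces to the elementary inequality $(1+x)e^{-x}<1$ for $x>0$ (immediate since both sides equal $1$ at $x=0$ while the derivative of $1-(1+x)e^{-x}$ equals $xe^{-x}>0$).
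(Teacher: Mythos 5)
Your proof is correct, and it takes a genuinely different route from the paper's. The paper works directly from the elementary representation $\zeta(s)=\frac{s}{s-1}-s\int_1^\infty \{u\}u^{-s-1}\,\md u$, differentiates, and reduces the problem to the explicit numerical bound $\int_1^\infty \{u\}u^{-\sigma-1}\,\md u<\frac{1}{2\sigma-1}$, which is then verified by a direct computation with the floor function and some elementary inequalities. You instead recast the statement as $\frac{\md}{\md\sigma}\log\big((\sigma-1)\zeta(\sigma)\big)>0$, express this via the Mellin representation $\Gamma(\sigma)\zeta(\sigma)=\int_0^\infty x^{\sigma-1}(e^x-1)^{-1}\,\md x$ as a difference of two weighted averages of $\log x$, and close the argument with the Chebyshev (FKG-type) correlation inequality applied to the weight $\rho(x)=x^{\sigma-2}e^{-x}$ and the increasing functions $\phi(x)=x/(1-e^{-x})$ and $h(x)=\log x$. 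All the steps check out: the identity $\phi\rho=x^{\sigma-1}/(e^x-1)$ is exact, the digamma formula $\Gamma'(\sigma-1)/\Gamma(\sigma-1)=\int x^{\sigma-2}e^{-x}\log x\,\md x / \int x^{\sigma-2}e^{-x}\,\md x$ requires $\sigma>1$ (which you have), and the strict monotonicity of $\phi$ correctly reduces to $(1+x)e^{-x}<1$. The payoff of your approach is conceptual: it exposes the inequality as a positive-correlation statement and makes the sign manifest without any numerics, whereas the paper's argument is more self-contained and elementary, relying only on the fractional-part integral for $\zeta$ and needing no knowledge of the Mellin or digamma representations. One minor wording nit: the statement is equivalent to $F'(\sigma)>0$ pointwise, which is slightly stronger than ``strict monotonicity of $F$'' as usually phrased, but since you prove the pointwise positivity of $F'$ directly this has no bearing on the correctness of the argument.
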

\begin{proof}
    We start by recalling the identity (see, e.g., \cite[(1.24)]{MV})
    \begin{equation}\label{zeta}
        \zeta(s)=\frac{s}{s-1}-s\int_1^\infty \{u\}u^{-s-1} \md u, \qquad \sigma>0
    \end{equation}
    where $\{u\}=u-\lfloor u \rfloor$ denotes the fractional part of $u$. Differentiating gives
    \[
    -\zeta'(s)=\frac{1}{(s-1)^2}+\int_1^\infty \{u\}u^{-s-1}\md u-s\int_1^\infty \{u\}u^{-s-1}\log u\md u,
    \]
    and so 
    \[
    -\dfrac{\zeta'}{\zeta}(\sigma)<\dfrac{\frac{1}{(\sigma-1)^2}+\int_1^\infty \{u\}u^{-\sigma-1}\md u}{\frac{\sigma}{\sigma-1}-\sigma\int_1^\infty \{u\}u^{-\sigma-1} \md u}.
    \]
    After a little rearranging, \eqref{log deri zeta} would follow if we can establish
    \begin{equation}\label{frac part intergal}
        \int_1^\infty \{u\}u^{-\sigma-1}\md u<\frac{1}{2\sigma-1}, \qquad \sigma>1.
    \end{equation}
    \footnote{In view of \eqref{zeta}, the bound \eqref{frac part intergal} is equivalent to
    \[
    \zeta(\sigma)>\frac{\sigma^2}{2\sigma^2-3\sigma+1}, \qquad \sigma>1.
    \]} The left-hand side equals
    \begin{align*}
        \int_1^\infty (u-\lfloor u\rfloor)u^{-\sigma-1}\md u =& \frac{1}{\sigma-1}-\sum_{n=1}^\infty n\int_{n}^{n+1}u^{-\sigma-1}\md u \\
        =&\frac{1}{\sigma-1}-\frac{1-2^{-\sigma}}{\sigma}-\sum_{n=2}^\infty \frac{n}{\sigma}\left(\frac{1}{n^{\sigma}}-\frac{1}{(n+1)^{\sigma}}\right).
    \end{align*}
    Using the inequality $(1+x)^\sigma\geq 1+\sigma x$ for $\sigma>1$, we see that the above sum is at least
    \[
    \sum_{n=2}^\infty \frac{n}{\sigma}\frac{\sigma}{n(n+1)^\sigma}=\sum_{n=3}^\infty \frac{1}{n^\sigma}>\int_{3}^\infty u^{-\sigma}\md u=\frac{3^{1-\sigma}}{\sigma-1}.
    \]
    Thus, in view of \eqref{frac part intergal}, it suffices to show that
    \[
    \frac{1}{\sigma-1}-\frac{1-2^{-\sigma}}{\sigma}-\frac{3^{1-\sigma}}{\sigma-1}<\frac{1}{2\sigma-1},
    \]
    which can easily be verified for all $\sigma>1$.
\end{proof}

\end{document}